\tikzset{global scale/.style={
    scale=#1,
    every node/.style={scale=#1}
  }
}
\def\NN{\mathbb{N}}
\def\PP{\mathbb{P}}
\def\RR{\mathbb{R}}
\def\ZZ{\mathbb{Z}}
\def\shF{\mathscr{F}}
\def\shO{\mathcal{O}}
\def\shI{\mathcal{I}}
\DeclareMathOperator\conv{conv}
\DeclareMathOperator\Vol{Vol}
\DeclareMathOperator\reg{reg}
\DeclareMathOperator\Cone{Cone}
\DeclareMathOperator\codim{codim}
\DeclareMathOperator\h{H}
\newtheorem{theorem}{Theorem}[section]
\newtheorem{corollary}[theorem]{Corollary}
\newtheorem{proposition}[theorem]{Proposition}
\newtheorem{lemma}[theorem]{Lemma}
\newtheorem{question}[theorem]{Question}
\theoremstyle{remark}
\newtheorem{remark}[theorem]{Remark}
\newtheorem{example}[theorem]{Example}
\theoremstyle{definition}
\newtheorem{definition}[theorem]{Definition}
\begin{document}
\title{On $k$-normality and Regularity of Normal Toric Varieties}
\author{Bach Le Tran}
\email{b.tran@sms.ed.ac.uk}
\address{School of Mathematics\\ University of Edinburgh\\ James Clerk Maxwell Building\\ Peter Guthrie Tait Road\\ Edinburgh EH9 3FD}
\begin{abstract}
We give a bound of $k$ for a very ample lattice polytope to be $k$-normal. Equivalently, we give a new combinatorial bound for the Castelnuovo-Mumford regularity of normal projective toric varieties.
\end{abstract}
%\pagenumbering{roman}
\maketitle
%\tableofcontents

\section{Introduction}
Let $L$ be a very ample line bundle on an irreducible projective variety $X$ defining an embedding $X\rightarrow \PP(\h^0(X,L))\cong  \PP^r$. We say that (the embedding of) $X$ is $k$-normal if the restriction map
\[\h^0(\PP^r,\shO_{\PP^r}(k))\rightarrow \h^0(X,\shO_X(k))\]
is surjective. We define the $k$-normality of $X$ to be the smallest positive integer $k_X$ such that $X$ is $k$-normal for all $k\ge k_X$. The $k$-normality of $X$ is closely related to its Castelnuovo-Mumford regularity. We have that $X$ is $(k+1)$-regular if and only if $X$ is $k$-normal and $\shO_X$ is $k$-regular (Proposition \ref{Mumford regularity equivalency}); therefore, $k_X\le \reg(X)-1$.
%We have that $X$ is $k$-normal for $k$ sufficiently large and that $X$ is $k$-normal does not implies that $X$ is $(k+1)$-normal, with some counterexamples given by \cite[Theorem 9]{Handelman1990} and \cite[Theorem 12]{Lason2017}. Hence, we can define the $k$-normality of $X$ to be the smallest positive integer $k_X$ such that $X$ is $k$-normal for all $k\ge k_X$. 

Now suppose that $X$ is a normal projective toric variety with $L$ a very ample line bundle on $X$. Then $L=\shO_X(D)$ for some torus invariant divisor $D$. Hence, $L$ corresponds to a lattice polytope $P=P_D$. We say that $P$ is $k$-normal if the map
\[\underbrace{P\cap M+\cdots+P\cap M}_{k\text{ times}}\rightarrow kP\cap M\]
is surjective. We also define the $k$-normality of $P$ to be the smallest positive integer $k_P$ such that $P$ is $k$-normal for all $k\ge k_P$. We have $X$ is $k$-normal if and only if $P$ is $k$-normal. Hence, $k_X$ and $k_P$ coincide.

In this paper, we will give a new combinatorial bound of the $k$-normality of very ample lattice polytopes. First of all, define $d_P$ to be the smallest positive integer such that the map
\[P\cap M+kP\cap M\rightarrow (k+1)P\cap M\] is surjective for all $k\ge d_P$. Such a $d_P$ always exists by Lemma \ref{P+(n-1)P=nP}. Since $P$ is very ample, for every vertex $v\in P$, the semigroup $\RR_{\ge 0}(P-v)\cap M$ is generated by $(P-v)\cap M$. Thus, for any lattice point $x\in d_P\cdot P\cap M$ and vertex $d_P\cdot v$ of $d_P\cdot P$, we have
\[x-d_P\cdot v=\sum_{i=1}^m(w_i-v),\]
for some $m<+\infty$, $w_i\in P\cap M$. For such a pair $(x,d_P v)$, we define 
\begin{equation*}\label{definition of sigma(x,v)}
\sigma(x,d_P v)=\min\left\{m\in\NN\hspace*{1mm}\Big\vert\hspace*{1mm}  x-d_Pv=\sum_{i=1}^m(w_i-v) \text{ for some }w_i\in P\cap M\right\}.
\end{equation*}

Let
\[m_P=\max \left\{\sigma(x,d_P v)\hspace*{1mm}\big\vert \hspace*{1mm} x\in (d_P P)\cap M, \text{ } v \text{ a vertex of }P\right\}.\]
We now state the most important corollary of our main result, Theorem \ref{New bound of k-normality}, as follows.

\begin{corollary}\label{Bound of k-normality}
	Suppose that $P$ is a very ample lattice polytope with $n$ vertices. Then
	\[k_P\le (m_P-d_P)\cdot n+1.\]
\end{corollary}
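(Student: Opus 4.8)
The plan is to show directly that every lattice point of $kP$ is a sum of $k$ lattice points of $P$ for every $k\ge K:=(m_P-d_P)n+1$; this is exactly $k$-normality for all such $k$, so the argument is uniform in $k$ and yields $k_P\le K$ with no separate induction. The engine is a reformulation of the definition of $m_P$: for each $x_0\in d_PP\cap M$ and each vertex $v$ of $P$, putting $m=\sigma(x_0,d_Pv)\le m_P$ we have $x_0-d_Pv=\sum_{i=1}^m(w_i-v)$ with $w_i\in P\cap M$, and rearranging gives the identity $x_0+(m_P-d_P)v=\sum_{i=1}^m w_i+(m_P-m)v$. Since $m\le m_P$ the coefficient $m_P-m$ is nonnegative, so regardless of the sign of $d_P-m$ the right-hand side is a sum of exactly $m_P$ lattice points of $P$. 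Thus \emph{for every} $x_0\in d_PP\cap M$ and \emph{every} vertex $v$, the point $x_0+(m_P-d_P)v$ is a sum of $m_P$ elements of $P\cap M$.

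First I would fix $x\in kP\cap M$ and locate a good vertex. Since $x/k\in P$, write $x=\sum_j\mu_jv_j$ with $\mu_j\ge 0$ and $\sum_j\mu_j=k$, the sum running over the $n$ vertices $v_j$. By pigeonhole some coefficient satisfies $\mu_{j_0}\ge k/n\ge K/n>m_P-d_P$; call this vertex $v$. The elementary but crucial observation is that subtracting a vertex keeps us inside the appropriate dilate: if $\mu_{j_0}\ge c$ then $x-cv=(\mu_{j_0}-c)v+\sum_{j\ne j_0}\mu_jv_j$ has nonnegative vertex-coordinates summing to $k-c$, so $x-cv\in(k-c)P\cap M$. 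Applying this with the integer $c=m_P-d_P$ (permissible as $\mu_{j_0}>c$) peels off $m_P-d_P$ copies of the single vertex $v$.

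Next I would continue peeling down to the distinguished level $d_P$ using the defining property of $d_P$: whenever the current level $\ell$ satisfies $\ell>d_P$, surjectivity of $P\cap M+(\ell-1)P\cap M\to\ell P\cap M$ splits off one more lattice point of $P$. Carrying this out $k-m_P$ further times, which is possible because $k\ge K\ge m_P$ and the source levels stay $\ge d_P+1$, produces
\[x=(m_P-d_P)\,v+z+x_0,\]
where $z$ is a sum of $k-m_P$ lattice points of $P$ and $x_0\in d_PP\cap M$. Applying the reformulation above to $x_0$ and $v$ rewrites $x_0+(m_P-d_P)v$ as a sum of $m_P$ lattice points, whence $x=z+\bigl(x_0+(m_P-d_P)v\bigr)$ is a sum of $(k-m_P)+m_P=k$ lattice points of $P$, as desired.

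The main obstacle is balancing two competing demands: I must reduce all the way to level $d_P$ (where $\sigma$ is defined) yet simultaneously reserve $m_P-d_P$ copies of one common vertex to absorb the surplus $(m_P-m)v$ created by the $\sigma$-decomposition. Guaranteeing a vertex with large enough coefficient is the pigeonhole step, and this is exactly where the factor $n$ enters. The remaining delicate point is verifying $K\ge m_P$, which is what keeps the count $k-m_P$ nonnegative: writing $a=m_P-d_P$ it reads $a(n-1)\ge d_P-1$. For $a\ge1$ this follows from $d_P\le n-1$ (Lemma \ref{P+(n-1)P=nP}), while the boundary case $a=0$ forces $d_P=1$; indeed, if $d_P\ge2$ then surjectivity fails at level $d_P-1$, so some witness $x_0\in d_PP\cap M$ lies outside $P\cap M+(d_P-1)P\cap M$, and unwinding any hypothetical $\sigma(x_0,d_Pv)\le d_P$ would exhibit $x_0$ precisely in that set, a contradiction; hence $m_P\ge d_P+1$. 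Thus $K\ge m_P\ge d_P$ in every case, and the reduction to level $d_P$ is legitimate for all $k\ge K$.
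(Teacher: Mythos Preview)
Your argument is correct. The core mechanism is the same as the paper's: reduce to a point $x_0\in d_PP\cap M$, then for a well-chosen vertex $v$ use the $\sigma$-decomposition $x_0-d_Pv=\sum_{i=1}^m(w_i-v)$ to rewrite $x_0+(m_P-d_P)v$ as a sum of exactly $m_P$ lattice points of $P$.

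Where you diverge is in the order of operations and in how the vertex is located. The paper first invokes Lemma~\ref{d_P <= m_P <= k_P}\,(\ref{2.4.b}) to write $p=x+\sum_{i=1}^{\nu_P-d_P}u_i+\sum_{i=1}^n\lambda_iv_i$ with \emph{integer} $\lambda_i\ge 0$ summing to $k-\nu_P$, and then pigeonholes on the $\lambda_i$; this is why $\nu_P$ appears, and it yields the sharper inequality $k_P\le(m_P-d_P-1)\,n+\nu_P+1$ of Theorem~\ref{New bound of k-normality}, from which the present corollary follows via $\nu_P\le n-1$. You instead pigeonhole first, on the \emph{real} barycentric coordinates of $x/k$, to find a vertex $v$ with weight exceeding $m_P-d_P$; you peel off $(m_P-d_P)v$ immediately (which keeps you in $(k-m_P+d_P)P\cap M$), and only afterwards descend to level $d_P$ using the defining property of $d_P$ alone. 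This sidesteps $\nu_P$ entirely and gives a cleaner, more self-contained route to the corollary, at the price of not seeing the refinement in Theorem~\ref{New bound of k-normality}. Your final check that $K\ge m_P$ (and in particular that $m_P=d_P$ forces $d_P=1$) is exactly Lemma~\ref{d_P <= m_P <= k_P}\,(\ref{2.4.e}) together with $d_P\le n-1$.
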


It is then natural to ask for an upper bound of $m_P$. If $P$ is a smooth polytope, we obtained the following result.
\begin{corollary}\label{Smooth corollary in introduction}
Let $P$ be a smooth $d$-dimensional lattice polytope with $n$ vertices. Let $\gamma$ be the smallest integer such that $P\subseteq C_{v, \gamma}:=\conv(v, v+\gamma\cdot(w_{E_1}-v),\cdots, v+\gamma\cdot(w_{E_d}-v))$ for any vertex $v\in P$, where the $(w_{E_i}-v)$'s are the primitive ray generators of the edges of $P$ coming from $v$. Then $P$ is $k$-normal for all 
\[k\ge (\gamma-1)\cdot (d-1)\cdot n+1.\]
\end{corollary}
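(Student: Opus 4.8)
The plan is to deduce the statement directly from Corollary \ref{Bound of k-normality}, which gives $k_P \le (m_P - d_P)\cdot n + 1$; hence it suffices to prove the purely local estimate $m_P - d_P \le (\gamma-1)(d-1)$. The entire argument takes place in the lattice coordinates attached to a fixed vertex $v$. Because $P$ is smooth, the primitive edge generators $e_i := w_{E_i}-v$, $i=1,\dots,d$, form a $\ZZ$-basis of $M$, the cone $\RR_{\ge 0}(P-v)$ is the unimodular cone $\NN e_1 + \cdots + \NN e_d$, and each $w_{E_i}=v+e_i$ is a lattice point of $P$, so that every $e_i$ lies in $(P-v)\cap M$. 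I would record these three consequences of smoothness at the outset, since each is used below.

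Next I would convert the containment $P\subseteq C_{v,\gamma}$ into a degree bound. Writing a lattice point $x\in d_P P\cap M$ as $x - d_P v = \sum_{i=1}^d a_i e_i$ with $a_i\in\NN$ (legitimate by the previous paragraph), I claim that $\sum_i a_i \le d_P\gamma$. Indeed, dilating the inclusion $P\subseteq C_{v,\gamma}$ about the ambient origin gives $x\in d_P P\subseteq d_P C_{v,\gamma}$, and translating by $-d_P v$ identifies $d_P C_{v,\gamma} - d_P v$ with the simplex $\bigl\{\sum_i s_i e_i : s_i\ge 0,\ \sum_i s_i \le d_P\gamma\bigr\}$; reading off the $e_i$-coordinates of $x - d_P v$ then yields $\sum_i a_i \le d_P\gamma$.

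With this in hand the estimate on $\sigma$ is immediate from the \emph{trivial} (degree-one) decomposition: since $x - d_P v = \sum_i a_i e_i = \sum_i a_i (w_{E_i}-v)$ expresses $x - d_P v$ as a sum of $\sum_i a_i$ vectors of the form $w-v$ with $w\in P\cap M$, we obtain $\sigma(x, d_P v)\le \sum_i a_i \le d_P\gamma$. As $x$ and $v$ were arbitrary, $m_P \le d_P\gamma$, and therefore $m_P - d_P \le d_P(\gamma-1)$. Finally I would invoke the classical bound $d_P\le d-1$ for the normality of dilations, furnished by Lemma \ref{P+(n-1)P=nP}, to conclude $d_P(\gamma-1)\le (d-1)(\gamma-1)$, which is exactly the required inequality.

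The point worth stressing is that the only genuinely delicate step is the degree count in the second paragraph, together with the (perhaps surprising) observation that no clever high-degree decomposition is needed at all: the crude degree-one decomposition already meets the target once it is combined with $d_P\le d-1$. Accordingly, the main thing to get right is the bookkeeping—using smoothness twice, once to guarantee that the $a_i$ are genuine nonnegative integers (so that the trivial decomposition has exactly $\sum_i a_i$ terms) and once to guarantee $v+e_i\in P$ (so those terms are legitimate elements of $(P-v)\cap M$)—and ensuring the dilation $C_{v,\gamma}\mapsto d_P C_{v,\gamma}$ is taken about the origin, consistently with the meaning of $d_P P$. I expect this verification, rather than any hard estimate, to be where care is required.
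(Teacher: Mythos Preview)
Your proof is correct and follows essentially the same route as the paper: the paper establishes $m_P\le d_P\gamma$ in Proposition~\ref{bounds of m1, m2 for smooth polytopes} via exactly the dilation-and-coordinate argument you outline, feeds this into Theorem~\ref{New bound of k-normality} (equivalently Corollary~\ref{Bound of k-normality}) to obtain $k_P\le d_P(\gamma-1)\cdot n+1$ in Corollary~\ref{Smooth corollary 1}, and then concludes Corollary~\ref{Smooth corollary in introduction} with the single remark that $d_P\le d-1$. Your proposal simply compresses these three steps into a single argument.
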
 

Finding the explicit value of $k_P$ is a really hard question in general. Beck et. al. (\cite{Beck2015}) showed that the $k$-normality of the polytope in Example $\ref{Bruns example}$ is $s-1$. With their notations, we have $k_P=\gamma(P)+1$ where $\gamma(P)$ is the largest height that contains gaps in $M_P$. There are also results by Higashitani (\cite{Higashitani2014}), Larso\'{n} and Micha\l ek (\cite{Lason2017}) that give $k_P$ for some classes of lattice polytopes.  Oda (\cite{Oda1988}) asked if $P$ is smooth, is it always the case that $k_P=1$? Despite the simple statement, it is still an open question at the time of writing.  For bounds of $k$-normality, Ogata (\cite[Theorem 2]{Ogata2005}) proved that any projective toric variety of dimension $n\ge 4$ which is a quotient of the projective $n$-space by a finite abelian group embedded by a very ample line bundle in $\PP^r$ is $k$-normal for every $k\ge n-1+[n/2]$. Equivalently, any $n$-dimensional very ample lattice simplex is $k$-normal for $k\ge  n-1+[n/2]$.

The main motivation for the study of $k$-normality is its relation to the Castelnuovo-Mumford regularity, an important invariant in algebraic geometry. First of all, the regularity measures the complexity of the ideal sheaf $\shI_X$ from the perspective of free resolution and gives a bound for the maximal degree of the defining equations of projective varieties. It also gives bound of complexity for algorithms calculating minimal free resolution of ideals generated by finitely many homogeneous polynomials (\cite{Mayr1982, Buchberger1983}). There has been a big focus on finding upper bounds for the Castelnuovo-Mumford regularity of varieties in general. Mumford (\cite{Bayer92}) proved that if $X\subset \PP^r$ is a reduced smooth subscheme purely of dimension $d$ in characteristics $0$, then 
$\reg(X)\le (d+1)(\deg(X)-2))+2$. Kwak (\cite{Kwak1998a}) proved that if $X$ is a smooth variety of dimension $d$ in $\PP^r$ then $\reg(X)\le \deg(X)-\codim(X)+2$ if $d=3$ and $\reg(X)\le \deg(X)-\codim(X)+5$ if $d=4$. Recently Kwak and Park (\cite{Kwak2014}) obtained an upper bound for the regularity of non-degenerate smooth projective varieties; however, it is very hard to find explicit bounds for particular cases. 

For toric varieties, Peeva and Sturmfels proved that for a projective toric variety $X$ of codimension $2$ in $\PP^{d-1}$, not contained in any hyperplane then $\reg(X)\le \deg(X)-1$ (\cite{Peeva1998}, \cite[Theorem 4.2]{Sturmfels1995}). Sturmfels also proved that if $X$ is a projective toric variety in $\PP^{d-1}$ then $\reg(X)\le d\cdot \deg(X)\cdot \codim(X)$ (\cite[Theorem 4.5]{Sturmfels1995}). 

The most well-known question in finding upper bounds for the regularity of projective varieties is the Eisenbud-Goto (\cite{Eisenbud1984}) conjecture which says that if $X$ is irreducible and reduced then
\[\reg(X)\le \deg(X)-\codim(X)+1.\]
Even though the conjecture fails in general (\cite{McCullough2016}), it has motivated many results on regularity. In some particular cases, the Eisenbud-Goto conjecture is proven to be true: smooth surfaces in characteristic zero (\cite{Lazarsfeld1987}), connected reduced curves (\cite{Giaimo2005}), etc. Furthermore, Eisenbud and Goto (\cite{Eisenbud1984}) proved that their conjecture holds when $X$ is arithmetically Cohen-Macaulay. Therefore, it holds for projectively normal toric varieties since they are arithmetically Cohen-Macaulay (\cite{Hochster1972}). For a more detailed list of cases where the Eisenbud-Goto conjecture holds, please refer to \cite{Nitsche2014}. Note that the Eisenbud-Goto conjecture is still open for toric varieties.

Combinatorially, for a normal projective toric variety $X$ embedded in $\PP^r$ via a very ample line bundle with the corresponding lattice polytope $P$, we have $\deg(X)=\Vol(P)$, the normalized volume of $P$, and $\codim(X)=|P\cap M|-\dim P-1$. We define the degree of $P$, denoted by $\deg(P)$, as follows. If $P$ has no interior lattice points, let $\deg(P)$ be the smallest non-negative integer $i$ such that $kP$ contains no interior lattice points for $1\le k\le d-i$. If $P$ has interior lattice point(s) then we define $\deg P=d$. By Proposition \ref{kP=reg(X)+1}, we have $\reg(X)=\max\{k_P,\deg(P)\}+1$. Hence, the Eisenbud-Goto conjecture can be translated as if 
\begin{equation*}
\max\{k_P,\deg(P)\}\le \Vol(P)-|P\cap M|+\dim P+1.
\end{equation*}
Note that by \cite[Proposition 2.2]{Hofscheier2017}, we have \[\deg(P)\le \Vol(P)-|P\cap M|+\dim P+1.\]
Hence, it remains to verify if
\[k_P\le \Vol(P)-|P\cap M|+\dim P+1.\]
In particular, if $k_P\le \deg(P)$ then the Eisenbud-Goto conjecture holds in this case. Our bound in Theorem \ref{New bound of k-normality} proves the conjecture for cases; in particular, the case $s=4$ in Example \ref{Bruns example}.
\subsection*{Acknowledgements}
The author wants to thank Jinhuyng Park for introducing the question and a useful discussion, Arend Bayer, David Cox, and Lukas Katth\"{a}n for some helpful suggestions and conversations, and Milena Hering for all the invaluable advice.
\section{$k$-normality of Very Ample Polytopes}
The main goal of this session is to give a bound for $k$-normality of very ample lattice polytopes. There are a few equivalent definitions of very-ampleness, so to avoid any confusion we introduce the definition used in this paper.

\begin{definition}
A lattice polytope $P\subseteq M_{\RR}$ is very ample if for every vertex $v\in P$, the semigroup $S_{P,v}=\NN(P\cap M-v)$ generated by the set 
\[P\cap M-v=\{u-v\vert u\in P\cap M\}\]
is saturated in $M$; i.e., if $c\cdot u\in S_{P,v}$ for some $c\in \NN^*$ then $u\in S_{P,v}$. Equivalently, $P$ is very ample if the affine monoid $\RR_{\ge 0}(P-v)\cap M$ is generated by the set $P\cap M-v$.
\end{definition}

%From the definition of Minkowski sum of two sets, we always have
%\begin{equation}\label{aP+bP<=(a+b)P}
%aP\cap M+bP\cap M\subseteq (a+b)P\cap M
%\end{equation}
%for every positive integers $a$ and $b$. It is clear that $P$ is normal if and only if $aP\cap M+bP\cap M= (a+b)P\cap M$ for every $a,b\in \ZZ_{\ge 1}$. The following lemma gives some classical cases of equality in Equation (\ref{aP+bP<=(a+b)P}).
The starting point for our main result is the following lemma:
\begin{lemma}\label{P+(n-1)P=nP}
Let $P$ be a $d$-dimensional lattice polytope with $n$ vertices $\mathcal{V}=\{v_1,\cdots, v_n\}$. 
\begin{enumerate}[(a)]
	\item For any $k\ge n-1$,
	\[(k+1)P\cap M=\mathcal{V}+kP\cap M.\]
	\item {\cite{Ewald1991, Liu1993, Bruns1997}} For $k\ge d-1$, we have
	\[(k+1)P\cap M=P\cap M+kP\cap M.\]
\end{enumerate}
\end{lemma}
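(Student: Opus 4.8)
The containment $\mathcal{V} + kP\cap M \subseteq (k+1)P\cap M$ is immediate: a vertex lies in $P$, so $v + y \in P + kP = (k+1)P$ whenever $y \in kP$, and a sum of lattice points is a lattice point. The content of (a) is the reverse inclusion, which I will prove directly by an elementary peeling argument. Part (b) is the classical Ewald--Wessels / Bruns--Gubeladze--Trung statement recorded in the cited references, so I will only indicate its role rather than reprove it.

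For (a), fix a lattice point $x \in (k+1)P\cap M$. Then $x/(k+1) \in P$, so writing this point as a convex combination of the vertices $\mathcal{V}$ and clearing the denominator gives
\[
x = \sum_{i=1}^n \mu_i v_i, \qquad \mu_i \ge 0, \qquad \sum_{i=1}^n \mu_i = k+1.
\]
The plan is to peel off a single vertex. Since the $n$ nonnegative coefficients $\mu_i$ sum to $k+1 \ge n$ --- and this is exactly where the hypothesis $k \ge n-1$ enters --- the pigeonhole principle forces some index $j$ with $\mu_j \ge (k+1)/n \ge 1$. Setting $y = x - v_j$, I then have
\[
y = (\mu_j - 1)v_j + \sum_{i\ne j}\mu_i v_i,
\]
a nonnegative combination of vertices whose coefficients sum to $k$, so that $y \in kP$; moreover $y = x - v_j$ is a difference of lattice points, hence $y \in kP\cap M$. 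Therefore $x = v_j + y \in \mathcal{V} + kP\cap M$, establishing the reverse inclusion and completing (a).

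The only points needing care are the passage from membership in the dilated polytope $(k+1)P$ to a barycentric representation with coefficient sum exactly $k+1$, and the observation that the average of the $n$ coefficients is at least one precisely when $k+1 \ge n$; beyond this bookkeeping there is no genuine obstacle, and the argument is self-contained. Part (b) is deeper: it is the classical fact that a $d$-dimensional lattice polytope satisfies $(k+1)P\cap M = P\cap M + kP\cap M$ already for $k \ge d-1$, improving the threshold of (a) from $n-1$ to the dimension $d-1$. I would simply invoke \cite{Ewald1991, Liu1993, Bruns1997} for it, noting that its proof proceeds by a triangulation and half-open parallelepiped decomposition rather than the elementary pigeonhole used above, and that for the purposes of defining $d_P$ either threshold suffices to guarantee existence.
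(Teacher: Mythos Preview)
Your proof of (a) is correct and essentially identical to the paper's: both write a lattice point of $(k+1)P$ as a nonnegative combination of the vertices with coefficient sum $k+1$, invoke pigeonhole from $k+1\ge n$ to find a coefficient at least $1$, and peel off that vertex. For (b) the paper likewise only cites the references, so your treatment matches there as well.
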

\begin{proof}
	We follow the argument in \cite{Ewald1991} to give a proof for $(a)$. Let $x$ be a lattice point in $(k+1)P\cap M$. Then $x=\sum_{i=1}^n\lambda_iv_i$ for some $\lambda_i\ge 0$, $\sum_{i=1}^n\lambda_i=k+1$. Since $k+1\ge n$, there must be an $i$ such that $\lambda_i\ge 1$. Then
	\begin{align*}
	x=v_i+\left(\sum_{\substack{1\le j\le n \\j\neq i}}\lambda_jv_j+(\lambda_i-1)v_i\right)
	\end{align*}
	with $\left(\sum_{\substack{1\le j\le n \\j\neq i}}\lambda_jv_j+(\lambda_i-1)v_i\right)\in kP\cap M$. The conclusion follows.
\end{proof}

From the above lemma, we obtain two well-defined invariants of  $P$ as follows.

\begin{definition}
Let $P$ be a lattice polytope with the set of vertices $\mathcal{V}=\{v_1,\cdots, v_n\}$. We define $d_P$ to be the smallest positive integer such that the map
\[(k+1)P\cap M=P\cap M+kP\cap M.\]
We also define $\nu_P$ to be the smallest positive integer such that for any $k\ge \nu_P$,
\[(k+1)P\cap M=\mathcal{V}+kP\cap M.\]
\end{definition}

Moreover, it is clear from the definitions that $d_P\le \nu_P\le n-1$. Also, if $P$ is an empty lattice polytope then $\nu_P=d_P$.

%\begin{lemma}\label{k-normal implies (k+1)-normal}
%	Let $P$ be a lattice polytope of dimension $d$. Suppose that $P$ is $k$-normal for some positive integer $k\ge d_P$. Then $P$ is $(k+1)$-normal.
%\end{lemma}
%
%\begin{proof}
%	Suppose that $P$ is $k$-normal; i.e.,
%	\[\underbrace{P\cap M+\cdots+P\cap M}_{k}\twoheadrightarrow kP\cap M,\]
%	then by the definition of $d_P$ and since $k\ge d_P$, we have
%	\[\underbrace{P\cap M+\cdots+P\cap M}_{k}+P\cap M\twoheadrightarrow kP\cap M+P\cap M\twoheadrightarrow (k+1)P\cap M.\]
%	In other words, $P$ is $(k+1)$-normal.
%\end{proof}
In general, $k$-normality does not imply $(k+1)$-normality, with some counterexamples given by \cite[Theorem 9]{Handelman1990} and \cite[Theorem 12]{Lason2017}. However, if $P$ is $k$-normal for some $k\ge d_P$ then it is $(k+1)$-normal. We will show this as a part of the following lemma.

\begin{lemma}\label{d_P <= m_P <= k_P}
	Let $P$ be a $d$-dimensional lattice polytope of dimension $d$ with $n$ vertices $\mathcal{V}=\{v_1,\cdots,v_n\}$. 
	\begin{enumerate}[(a)]
		\item \label{2.4.a} For any $k\ge d_P$ and $u\in kP\cap M$, we can write $u$ as
		\[u=x+\sum_{i=1}^{k-d_P}u_i,\]
		where $x\in d_PP\cap M$ and $u_i\in P\cap M$ for all $1\le i\le k-d_P$. 
		\item \label{2.4.b} If $k\ge \nu_P$, for any $u\in kP\cap M$,
		\[u=x+\sum_{i=1}^{\nu_P-d_P}u_i+\sum_{i=1}^n\lambda_iv_i\]
		for some $x\in d_PP\cap M$, $u_i\in P\cap M$, $\lambda_i\in\NN$ such that $\sum_{i=1}^n \lambda_i=k-\nu_P$.
		\item \label{2.4.c} If $P$ is $k$-normal for some positive integer $k\ge d_P$ then $P$ is $(k+1)$-normal.
		\item \label{2.4.d} $d_P\le m_P\le k_P$.
		\item \label{2.4.e} $P$ is normal $\Leftrightarrow d_P=k_P\Leftrightarrow m_P=k_P\Leftrightarrow d_P=m_P$. Therefore, if $P$ is not normal then $k_P\ge m_P\ge d_P+1$.
	\end{enumerate}

\end{lemma}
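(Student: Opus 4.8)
The plan is to obtain (a)--(c) by stripping off lattice points one at a time using the defining relations of $d_P$ and $\nu_P$, and then to read off (d) and (e) from a single translation identity for $\sigma$.

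For (a) I would induct on $k\ge d_P$. The base case $k=d_P$ is trivial (take $x=u$ and the empty sum). For the inductive step, since $k\ge d_P$ the defining property of $d_P$ gives $(k+1)P\cap M=P\cap M+kP\cap M$, so any $u\in(k+1)P\cap M$ splits as $u=w+u'$ with $w\in P\cap M$ and $u'\in kP\cap M$; applying the inductive hypothesis to $u'$ and appending $w$ finishes the step. Part (b) is the same idea run first with vertices: for $k\ge\nu_P$ I iterate $(k+1)P\cap M=\mathcal{V}+kP\cap M$ to peel off $k-\nu_P$ vertices and land in $\nu_P P\cap M$, then feed the remainder into (a), which applies because $\nu_P\ge d_P$. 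For (c), given $u\in(k+1)P\cap M=P\cap M+kP\cap M$ (using $k\ge d_P$), I write $u=w+u'$ with $u'\in kP\cap M$, decompose $u'$ into $k$ lattice points of $P$ by $k$-normality, and append $w$ to get a decomposition into $k+1$ points.

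The engine for (d) and (e) is the elementary observation that, reading the defining equation of $\sigma$ inside $M$, one has $x=(d_P-m)v+\sum_{i=1}^m w_i$; equivalently, for any $K\ge d_P$,
\[\sigma(x,d_P v)\le K\iff x+(K-d_P)v\ \text{is a sum of}\ K\ \text{lattice points of}\ P,\]
and $x+(K-d_P)v\in KP\cap M$ automatically. For (d) I first note $d_P\le k_P$: for $k\ge k_P$ the polytope is $(k+1)$-normal, so peeling one point off gives $(k+1)P\cap M=P\cap M+kP\cap M$, whence $d_P\le k_P$. Then $m_P\le k_P$ is immediate from the displayed equivalence with $K=k_P$, since $k_P$-normality makes every $x+(k_P-d_P)v$ a sum of $k_P$ points. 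For $d_P\le m_P$, the case $d_P=1$ is clear, and if $d_P\ge2$ then minimality of $d_P$ yields an $x\in d_P P\cap M$ with $x\notin P\cap M+(d_P-1)P\cap M$; such an $x$ cannot be a sum of $d_P$ lattice points, so by the identity $\sigma(x,d_P v)\ge d_P+1$ for every vertex $v$, giving $m_P\ge d_P+1$. This last computation also proves the concluding clause (non-normal $\Rightarrow m_P\ge d_P+1$).

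For (e) I would first record that $d_P=1\iff P$ is normal (unwinding $(k+1)P=P+kP$ inductively gives full decomposition), together with the key point that $P$ being $d_P$-normal forces $d_P=1$: otherwise $d_P$-normality gives $d_P P\cap M=P\cap M+(d_P-1)P\cap M$, contradicting minimality of $d_P$. Combined with the displayed identity, $m_P=d_P$ is equivalent to $d_P$-normality and hence to normality, which settles $d_P=m_P\iff$ normal; and the sandwich $d_P\le m_P\le k_P$ turns $d_P=k_P$ into $m_P=d_P$, settling $d_P=k_P\iff$ normal. The hard part is the implication $m_P=k_P\Rightarrow$ normal: its converse is immediate (normality forces $d_P=m_P=k_P=1$), but to prove it I must rule out $d_P<m_P=k_P$ for a non-normal $P$, i.e.\ show that non-normality forces the strict inequality $k_P>m_P$. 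The $\sigma$-witnesses only manufacture a point of height $m_P-1$ that fails to be a sum of $m_P-1$ points, which merely re-proves $k_P\ge m_P$; upgrading this to a genuine obstruction at height $m_P$, so that $P$ fails to be $m_P$-normal and therefore $k_P\ge m_P+1$, is the step I expect to require the most care, presumably by exploiting the saturatedness of the vertex semigroup at a $\sigma$-maximizing pair $(x^*,v^*)$.
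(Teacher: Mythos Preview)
Your arguments for (a)--(c) are correct and match the paper's (the paper phrases them as composites of surjections rather than inductions, but the content is identical). For (d), your proof that $m_P\le k_P$ via the translation identity is exactly the paper's. For $d_P\le m_P$ the paper takes a different route: it picks $w\in P\cap M$ at maximal Euclidean distance from a vertex $v$ and uses the triangle inequality on $d_Pw-d_Pv=\sum(w_i-v)$ to force $\sigma(d_Pw,d_Pv)\ge d_P$. Your minimality argument (an $x\in d_PP\cap M$ outside $P\cap M+(d_P-1)P\cap M$ cannot be a sum of $d_P$ points, hence $\sigma(x,d_Pv)\ge d_P+1$) is cleaner, works over any lattice without invoking a metric, and simultaneously delivers the ``therefore'' clause of (e).

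You are right to be stuck on the implication $m_P=k_P\Rightarrow P$ normal in (e): this implication is \emph{false}. The paper itself says so in the remark immediately following the lemma (``$m_P=k_P$ does not implies $P$ is normal''), and the $s=4$ case of the Bruns--Gubeladze polytope gives a concrete counterexample with $d_P=2$ and $m_P=k_P=3$. The paper's printed proof of (e) is garbled: the first bullet is labelled ``$m_P=k_P$'' but its argument assumes $d_P=k_P\ge2$ and derives a contradiction, so it is really a proof of $d_P=k_P\Rightarrow$ normal; the second bullet then appeals circularly to the first. In short, the chain in (e) should read $P$ normal $\Leftrightarrow d_P=k_P\Leftrightarrow d_P=m_P$, and the consequence ``$P$ not normal $\Rightarrow k_P\ge m_P\ge d_P+1$'' (which is what Theorem~\ref{New bound of k-normality} actually uses) follows from these two equivalences together with (d). So do not try to close that gap; the correct move is to drop the offending equivalence from the statement.
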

\begin{proof}
	\begin{enumerate}[(a)]
		\item By the definition of $d_P$, we have a surjective map
		\[d_PP\cap M+\underbrace{P\cap M+\cdots+P\cap M}_{k-d_P}\twoheadrightarrow kP\cap M.\]
		Hence, for any $k\ge d_P$ and $u\in kP\cap M$, we can write $u$ as
		\[u=x+\sum_{i=1}^{k-d_P}u_i,\]
		where $x\in d_PP\cap M$ and $u_i\in P\cap M$ for all $1\le i\le k-d_P$.
		\item Similarly, for $k\ge \nu_P$, we have a surjection
		\[d_PP\cap M+\underbrace{P\cap M+\cdots+P\cap M}_{\nu_P-d_P}+\underbrace{\mathcal{V}+\cdots+\mathcal{V}}_{k-\nu_P}\twoheadrightarrow kP\cap M,\]
		which yields the conclusion.
		\item Suppose that $P$ is $k$-normal; i.e.,
		\[\underbrace{P\cap M+\cdots+P\cap M}_{k}\twoheadrightarrow kP\cap M,\]
		then by the definition of $d_P$ and since $k\ge d_P$, we have
		\[\underbrace{P\cap M+\cdots+P\cap M}_{k}+P\cap M\twoheadrightarrow kP\cap M+P\cap M\twoheadrightarrow (k+1)P\cap M.\]
		In other words, $P$ is $(k+1)$-normal.
		\item It follows from the definitions that $k_P\ge d_P$. Now let $x$ be any lattice point in $d_PP\cap M$ and $v$ a vertex of $P$. Then since $x+(k_P-d_P)v\in kP\cap M$, there exists $w_i\in P\cap M$, $i=1,\cdots, k$ such that $x+(k_P-d_P)v=\sum_{i=1}^{k_P}w_i$. In other words,
		\[x-d_Pv=\sum_{i=1}^{k_P}(w_i-v).\]
		Therefore, $\sigma(x,d_Pv)\le k_P$. Hence, $m_P\le k_P$.
		
		For any vertex $v$ of $P$, let $w\in P\cap M$ be a point with maximal distance from $v$. We have
		\[d_Pw-d_Pv=\sum_{i=1}^{\sigma(d_Pw,d_Pv)}(w_i-v)\]
		for some $w_i\in P\cap M$.  Then
		\begin{align*}
		\left\Vert d_Pw-d_Pv\right\Vert =\left\Vert\sum_{i=1}^{\sigma(d_Pw,d_Pv)}(w_i-v)\right\Vert	\le &\sum_{i=1}^{\sigma(d_Pw,d_Pv)}\left\Vert(w_i-v)\right\Vert\\
		\le &\sigma(d_Pw,d_Pv)\cdot \left\Vert w-v\right\Vert.
		\end{align*}
		It follows that $\sigma(d_Pw,d_Pv)\ge d_P$. Therefore, $m_P\ge d_P$.
		\item $\bullet$ $P$ is normal $\Leftrightarrow m_P=k_P$: if $P$ is normal then $k_P=d_P=1$. Conversely, suppose that $k=d_P=k_P\ge 2$. Then by the definitions of $k_P$ and $d_P$, we have a surjection
		\[\underbrace{P\cap M+\cdots+P\cap M}_{k}\twoheadrightarrow kP\cap M,\]
		while the map
		\[(k-1)P\cap M+P\cap M\rightarrow kP\cap M\]
		is not surjective. This is a contradiction because 
		\[\underbrace{P\cap M+\cdots+P\cap M}_{k}\subseteq (k-1)P\cap M+P\cap M.\]
		Hence, $P$ must be normal in this case.
		
		\noindent $\bullet$ $P$ is normal $\Leftrightarrow d_P=k_P$: If $P$ is normal then $k_P=d_P=1$. Conversely, suppose that $d_P=k_P$. Then by (\ref{2.4.d}), we have $m_P=k_P$. Therefore, $P$ is normal by the first equivalence.
		
		\noindent $\bullet$ $P$ is normal $\Leftrightarrow m_P=d_P$: if $P$ is normal then it is clear that $m_P=d_P=1$. Suppose conversely that $m_P=d_P$. Then for any vertex $v\in P\cap M$ and any $x\in d_PP\cap M$, we have $x-d_Pv=\sum_{i=1}^{d_P}(w_i-v)$ which implies $x=\sum_{i=1}^{d_P}w_i$ for some $w_i\in P\cap M$. Then $P$ is $d_P$-normal, so $d_P=k_P$ by the first part of this lemma. Thus $P$ is normal by the second equivalence.
	\end{enumerate}
\end{proof}

\begin{remark}The above lemma plays a crucial role in our main result in this section. Notice that $m_P=k_P$ does not implies $P$ is normal. A counterexample is given by the case $s=4$ in Example \ref{Bruns example}.
\end{remark}
\begin{remark}
Let $P$ be a $d$-dimensional lattice polytope. Let $\mathcal{LD}_P(n)$ be the property that for any $k\ge n$ and $u\in kP\cap M$, we can write $u$ as
\[u=x+\sum_{i=1}^{k-n}u_i,\]
where $x\in nP\cap M$ and $u_i\in P\cap M$ for all $1\le i\le k-n$. Then \[d_P=\min\{n\in \NN\hspace{1mm}\vert\hspace{1mm} \mathcal{LD}_P(n)\text{ holds}\}.\]
	
Indeed, suppose that $N=\min\{n\in \NN\hspace{1mm}\vert\hspace{1mm} \mathcal{LD}_P(n)\text{ holds}\}$. Then we have a surjection
\[kP\cap M+P\cap M\twoheadrightarrow (k+1)P\cap M\]
for all $k\ge N$. Therefore, $N\ge d_P$. On the other hand, by Lemma \ref{d_P <= m_P <= k_P}, $\mathcal{LD}_P(d_P)$ holds so $N\le d_P$ because of the minimality of $N$. Hence, $N=d_P$. The conclusion follows.
\end{remark}

\begin{theorem}\label{New bound of k-normality}
Suppose that $P$ is a very ample lattice polytope with $n$ vertices. Then
\[k_P\le (m_P-d_P)\cdot n+1.\]
The equality occurs if and only if $P$ is normal. Furthermore, if $P$ is not normal then
\[k_P\le (m_P-d_P-1)\cdot n+\nu_P+1.\]
\end{theorem}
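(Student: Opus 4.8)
The plan is to reduce everything to producing, for one carefully chosen value $k_0$, an explicit decomposition of an arbitrary $u\in k_0P\cap M$ into exactly $k_0$ lattice points of $P$. Once $P$ is shown to be $k_0$-normal with $k_0\ge d_P$, Lemma~\ref{d_P <= m_P <= k_P}(\ref{2.4.c}) propagates normality to every $k\ge k_0$, giving $k_P\le k_0$. I would treat the normal and non-normal cases separately. If $P$ is normal, Lemma~\ref{d_P <= m_P <= k_P}(\ref{2.4.e}) gives $m_P=d_P$ and $k_P=1$, so both sides of the first bound equal $1$ and equality holds. If $P$ is \emph{not} normal, then $m_P\ge d_P+1$ by parts (\ref{2.4.d})--(\ref{2.4.e}), and I will prove the stronger bound $k_P\le k_0:=(m_P-d_P-1)\cdot n+\nu_P+1$. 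Since $\nu_P\le n-1$, this $k_0$ satisfies $k_0=(m_P-d_P)n+1-(n-\nu_P)<(m_P-d_P)n+1$, so the stronger bound implies the first one with strict inequality. Combining the two cases yields the ``equality if and only if $P$ is normal'' statement.

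For the core step, fix $u\in k_0P\cap M$ in the non-normal case. Because $k_0\ge\nu_P$, Lemma~\ref{d_P <= m_P <= k_P}(\ref{2.4.b}) lets me write
\[
u=x+\sum_{i=1}^{\nu_P-d_P}u_i+\sum_{j=1}^n\lambda_jv_j,
\]
with $x\in d_PP\cap M$, $u_i\in P\cap M$, and $\lambda_j\in\NN$ satisfying $\sum_j\lambda_j=k_0-\nu_P=(m_P-d_P-1)n+1$. A pigeonhole argument then forces one vertex to appear with high multiplicity: if every $\lambda_j$ were at most $m_P-d_P-1$, then $\sum_j\lambda_j\le(m_P-d_P-1)n$, contradicting $\sum_j\lambda_j=(m_P-d_P-1)n+1$. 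Hence there is an index $j_0$ with $\lambda_{j_0}\ge m_P-d_P$.

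Next I invoke very-ampleness through the invariant $\sigma$. Setting $\sigma:=\sigma(x,d_Pv_{j_0})\le m_P$, the defining relation $x-d_Pv_{j_0}=\sum_{i=1}^{\sigma}(w_i-v_{j_0})$ rearranges to $x=\sum_{i=1}^{\sigma}w_i+(d_P-\sigma)v_{j_0}$ with $w_i\in P\cap M$. Substituting into the expression for $u$ and collecting the copies of $v_{j_0}$, the coefficient of $v_{j_0}$ becomes $\lambda_{j_0}+(d_P-\sigma)$, which is $\ge 0$ precisely because $\lambda_{j_0}\ge m_P-d_P\ge\sigma-d_P$. Thus every summand is now a genuine lattice point of $P$ carrying a nonnegative integer multiplicity, and a direct count gives the total number of summands as $\sigma+(\nu_P-d_P)+(\lambda_{j_0}+d_P-\sigma)+\sum_{j\ne j_0}\lambda_j=\nu_P+\sum_j\lambda_j=k_0$. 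Hence $u$ is a sum of exactly $k_0$ lattice points of $P$, so $P$ is $k_0$-normal.

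The difficulties here are bookkeeping rather than conceptual. The pigeonhole threshold must be calibrated so that the guaranteed multiplicity $\lambda_{j_0}$ is exactly large enough to absorb the possibly negative correction $(d_P-\sigma)v_{j_0}$ (note $\sigma$ may exceed $d_P$, up to $m_P$), and the summand count must land on precisely $k_0$ so that no leftover terms appear. I expect the main things to verify carefully are the inequalities $k_0\ge\nu_P$ and $k_0\ge d_P$, which are what license Lemmas~\ref{d_P <= m_P <= k_P}(\ref{2.4.b}) and (\ref{2.4.c}), together with the bound $\nu_P\le n-1$, which is exactly what produces the gap between the two displayed estimates and hence both the strictness in the non-normal case and the equality characterization.
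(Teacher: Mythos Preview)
Your proposal is correct and follows essentially the same approach as the paper: decompose via Lemma~\ref{d_P <= m_P <= k_P}(\ref{2.4.b}), apply pigeonhole to find a vertex with multiplicity at least $m_P-d_P$, then use the $\sigma$-representation at that vertex to rewrite $x$ and check the resulting coefficients are nonnegative and sum to the target. The only cosmetic difference is that the paper proves $k$-normality directly for every $k\ge(m_P-d_P-1)n+\nu_P+1$, whereas you prove it only at $k_0$ and then invoke Lemma~\ref{d_P <= m_P <= k_P}(\ref{2.4.c}); since $k_0\ge\nu_P+1>\nu_P\ge d_P$, this is legitimate and the two arguments are interchangeable.
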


\begin{proof}
If $P$ is normal then $m_P=d_P=k_P=1$ by Lemma \ref{d_P <= m_P <= k_P} (\ref{2.4.e}). Assume that $P$ is not normal, since $\nu_P\le n-1$, it is enough to show that
\[k_P\le (m_P-d_P-1)\cdot n+\nu_P+1.\]
By Lemma \ref{d_P <= m_P <= k_P} (\ref{2.4.e}), we have $m_P\ge d_P+1$. Let $\mathcal{V}=\{v_1,\cdots, v_n\}$ be the set of vertices of $P$. Let $k\ge(m_P-d_P-1)\cdot n+\nu_P+1$ and $p\in kP\cap M$. Notice that $(m_P-d_P-1)\cdot n+\nu_P+1\ge \nu_P+1$, by Lemma \ref{d_P <= m_P <= k_P} (\ref{2.4.b}) the lattice point $p$ of $kP$ can be written as
\begin{equation}\label{p=x+sum u_i+sum lamda_iv_i}
p=x+\sum_{i=1}^{\nu_P-d_P}u_i+\sum_{i=1}^n\lambda_iv_i
\end{equation}
for some $x\in d_PP\cap M$, $u_i\in P\cap M$, and $\lambda_i\in\ZZ_{\ge 0}$ such that $\sum_{i=1}^n\lambda_i=k-\nu_P$. Now $k\ge (m_P-d_P-1)\cdot n+\nu_P+1$ implies that $k-\nu_P\ge (m_P-d_P-1)n+1$. Thus, by the pigeonhole principle, there must be an $i$ such that $\lambda_i\ge m_P-d_P$. Without loss of generality, assume that $\lambda_1\ge m_P-d_P$. Since $P$ is very ample, we can write
\begin{equation}\label{x-d_Pv_1=sum a_i(w_i-v_1)}
x-d_Pv_1=\sum_{i\in I}a_i(w_i-v_1)
\end{equation}
for some $a_i\in\NN$ and $w_i\in P\cap M$ with $\sum_{i\in I}a_i\le m_P$. Substituting Equation (\ref{x-d_Pv_1=sum a_i(w_i-v_1)}) into Equation (\ref{p=x+sum u_i+sum lamda_iv_i}) yields
\begin{align*}
p&=d_Pv_1+\sum_{i\in I}a_i(w_i-v_1)+\sum_{i=1}^{\nu_P-d_P}u_i+\sum_{i=1}^n\lambda_iv_i\\
&=\left(d_P+\lambda_1-\sum_{i\in I}a_i\right)v_1+\sum_{i\in I}a_iw_i+\sum_{i=1}^{\nu_P-d_P}u_i+\sum_{i=2}^n\lambda_iv_i.
\end{align*}
The sum of the coefficients in the last line is $k$ and each of them is non-negative since $d_P+\lambda_1\ge m_P\ge \sum_{i\in I}a_i$. Hence, $p$ can be written as a sum of $k$ lattice points in $P$; i.e., $P$ is $k$-normal. Therefore, $k_P\le (m_P-d_P-1)\cdot n+\nu_P+1$.

Now suppose that $k_P=(m_P-d_P)\cdot n+1$ but $P$ is not normal. Then
\[(m_P-d_P)\cdot n+1\le (m_P-d_P-1)\cdot n+\nu_P+1\]
which implies $\nu_P\ge n$, a contradiction. Hence, $P$ must be normal.
\end{proof}

\begin{remark}\text{ }
If $P$ is normal, then $m_P=d_P=1$ and $(m_P-d_P)\cdot n+1=1$. Our bound is sharp for this case. Another case where our bound is sharp is given in Example \ref{Bruns example}.
\end{remark}

The following example gives a comparison between known results on $k$-normality of polytopes with our result in Theorem \ref{New bound of k-normality} for the case of unit hypercubes.
\begin{example}\label{hyper cube example}
	Consider the unit $d$-dimensional hypercube $P$ with $X$ the toric variety obtained from $P$. Then we know that $d_P=1$ and it follows that $m_P=1$. Our bound in Theorem \ref{New bound of k-normality} implies that $k_P=1$. This bound is sharp.	
	We have the following table of known bounds of $k_P$.
	
	\begin{center}
		\begin{tabular}{|c|c|c|c|c|}
			\hline
			$k_P$&Theorem \ref{New bound of k-normality} & Mumford (\cite{Bayer92}) & Sturmfels (\cite{Sturmfels1995}) & Eisenbud-Goto (\cite{Eisenbud1984}) \\
			\hline
			$1$&$1$ & $(d+1)(d!-2)+1$ & $2^d\cdot (d!)(2^d-d-1)-1$ & $d!-2^d+d+1$ \\
			\hline 
		\end{tabular}
	\end{center}
\end{example}

The only occasion where we need very-ampleness in the proof of Theorem \ref{New bound of k-normality} is to define $m_P$. Thus, if we assume $m_P$ is defined for an arbitrary lattice polytope $P$, it follows that $P$ is $k$-normal for $k$ big enough. We obtain the following criterion for a lattice polytope to be very ample.
\begin{proposition}
	Let $P$ be a lattice polytope. Then $P$ is very ample if and only if there exists $r\ge d_p$ such that for any $x\in rP\cap M$ and $v$ a vertex of $P$ we have
	\[x-rv=\sum_{i=1}^n(w_i-v)\]
	for some $n<\infty$ and $w_i\in P\cap M$.
\end{proposition}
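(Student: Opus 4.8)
The plan is to prove the two implications separately, with the forward direction being essentially a restatement of the definition and the reverse direction carrying the real content. For the forward direction, suppose $P$ is very ample and take $r = d_P$, which trivially satisfies $r \ge d_P$. Given $x \in d_P P \cap M$ and a vertex $v$, the point $x - d_P v$ lies in $d_P(P-v) \subseteq \RR_{\ge 0}(P - v)$ and is a lattice point, so by the equivalent formulation of very-ampleness it belongs to $S_{P,v} = \NN(P \cap M - v)$. Writing this membership out with multiplicities gives $x - d_P v = \sum_{i=1}^n (w_i - v)$ with $w_i \in P \cap M$, which is exactly the asserted decomposition. This is precisely the computation used in the introduction to define $\sigma(x, d_P v)$ and $m_P$.

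For the reverse direction, fix a vertex $v$ and a lattice point $y \in \RR_{\ge 0}(P-v) \cap M$; the goal is to show $y \in S_{P,v}$, which gives the inclusion $\RR_{\ge 0}(P-v) \cap M \subseteq S_{P,v}$, the nontrivial half of very-ampleness. Since $0 \in P - v$, the dilates $t(P-v)$ are nested and increasing, so $\RR_{\ge 0}(P-v) = \bigcup_{t \ge 0} t(P-v)$; hence I can choose an integer $k \ge r$ with $y \in k(P-v)$, i.e. $x := y + kv \in kP \cap M$. Because $r \ge d_P$, part (\ref{2.4.a}) of Lemma \ref{d_P <= m_P <= k_P}, after regrouping the $P \cap M$-summands, lets me write $x = X + \sum_{i=1}^{k-r} u_i$ with $X \in rP \cap M$ and $u_i \in P \cap M$. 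Applying the hypothesis to $X$ and the vertex $v$ yields $X - rv = \sum_{j}(w_j - v)$ with $w_j \in P \cap M$. Substituting and cancelling the multiples of $v$ collapses everything to $y = x - kv = \sum_j (w_j - v) + \sum_{i=1}^{k-r}(u_i - v)$, exhibiting $y$ as an element of $S_{P,v}$.

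The main obstacle is organizing the reverse direction so that the hypothesis — which only supplies decompositions for lattice points of the single dilate $rP$ — can be leveraged for an arbitrary lattice point of the cone. The key maneuver is to realize $y$ as the tail $x - kv$ of a genuine lattice point $x$ of some dilate $kP$, and then to use part (\ref{2.4.a}) of Lemma \ref{d_P <= m_P <= k_P} to peel off the excess degree $k - r$ as honest points of $P \cap M$, thereby reducing the base to $rP$ where the hypothesis applies. Once this reduction is in place the remaining computation is a one-line cancellation. This is the direct counterpart of the observation preceding the proposition: the stated condition is exactly the data needed to define an $m_P$-type invariant, and the argument above extracts very-ampleness from it without having to route through Theorem \ref{New bound of k-normality}.
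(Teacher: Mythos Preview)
Your proof is correct, and the reverse direction takes a genuinely different route from the paper's. The paper defines, from the hypothesis at level $r$, an invariant $m$ playing the role of $m_P$, and then invokes the proofs of Lemma~\ref{d_P <= m_P <= k_P}(\ref{2.4.d}) and Theorem~\ref{New bound of k-normality} to conclude $k_P\le (m-d_P)\cdot n+1$; from this it deduces that $P$ is $k$-normal for all large $k$, and finally appeals to the equivalence between eventual $k$-normality and very-ampleness. In other words, the paper routes the implication through its main theorem.

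You instead verify the saturation condition for each corner cone directly: given $y\in\RR_{\ge 0}(P-v)\cap M$, you lift it to a lattice point of some dilate $kP$ with $k\ge r$, peel off $k-r$ summands in $P\cap M$ using the definition of $d_P$ (your regrouping of Lemma~\ref{d_P <= m_P <= k_P}(\ref{2.4.a}) to base $rP$ is valid since $r\ge d_P$), and then apply the hypothesis once at level $r$. This is shorter and self-contained; it avoids both Theorem~\ref{New bound of k-normality} and the auxiliary fact that eventual $k$-normality implies very-ampleness. What the paper's approach buys in exchange is an explicit quantitative statement: not only is $P$ very ample, but one actually obtains a bound on $k_P$ in terms of the data at level $r$. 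Your argument establishes very-ampleness without producing that bound, which is fine for the proposition as stated.
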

\begin{proof}
The ``only if" part follows directly from the definition of very ample polytopes. We now prove the ``if" part. For an $r\ge d_P$, define
\[m=\max\left\{n\in\NN\bigg\vert x-rv=\sum_{i=1}^n(w_i-v), w_i\in P\cap M\right\}.\]
It follows from the proofs of Lemma \ref{d_P <= m_P <= k_P} (\ref{2.4.d}) and Theorem \ref{New bound of k-normality} that $m\ge d_P$ and
\[k_P\le (m-d_P)\cdot n+1.\]
Then $P$ is $k$-normal for $k\gg 0$, which implies that $P$ is very ample. The conclusion follows.
\end{proof}

\section{Bounds of $m_P$ and Aplications on Smooth Polytopes}
\label{section on bounds of m}

In this section, we will give some bounds for $m_P$ depending on the combinatorial data of smooth lattice polytope $P$. We begin by introducing some standard facts about polytopes.

\begin{definition}
Let $P$ be a lattice polytope of dimension $d$. The normalized volume of $P$, denoted by $\Vol(P)$, is defined to be 
\[\Vol(P)=d!\cdot\{\text{Euclidean volume of } P\}.\]
\end{definition}

The following classical lemma gives a straightforward way to calculate the normalized volume of any lattice polytope given the coordinates of its vertices.

\begin{lemma}\label{Simplex volume}
Let $P$ be a $d$-simplex with vertices $\{v_0,\cdots,v_d\}$. Then the normalized volume of $P$ is given by
\[\Vol(P)=|\det(v_1-v_0,\cdots,v_d-v_0)|.\]
\end{lemma}
Using normalized volume, we obtain our first bound of $m_P$:
\begin{proposition}\label{Bound of m2 in term of volume}
Let $P$ be a smooth $d$-dimensional lattice polytope. Then for every $x\in (d_P\cdot P)\cap M$ and $v$ a vertex of $P$, we have
\[m_P\le d\cdot d_P^d\cdot\Vol(P).\]
\end{proposition}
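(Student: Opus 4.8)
The plan is to use smoothness to replace the cone at each vertex by a unimodular cone, so that $\sigma(x,d_Pv)$ is controlled by the sum of the coordinate entries of $x-d_Pv$ in the edge basis, and then to bound each entry by a normalized-volume estimate via Lemma \ref{Simplex volume}. Throughout, fix a vertex $v$ and a lattice point $x\in(d_PP)\cap M$; the goal is to bound $\sigma(x,d_Pv)$ uniformly and then take the maximum to get $m_P$.

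First I would set up the edge basis. Let $e_1=w_{E_1}-v,\ldots,e_d=w_{E_d}-v$ be the primitive ray generators of the edges at $v$. Smoothness means $(e_1,\ldots,e_d)$ is a $\ZZ$-basis of $M$, so the cone $\RR_{\ge 0}(P-v)$ is unimodular and $\RR_{\ge 0}(P-v)\cap M=\ZZ_{\ge 0}e_1+\cdots+\ZZ_{\ge 0}e_d$. Since $x-d_Pv$ lies in this cone and in $M$, I can write $x-d_Pv=\sum_{j=1}^d c_je_j$ with $c_j\in\ZZ_{\ge 0}$. Because each $e_j=w_{E_j}-v$ with $w_{E_j}\in P\cap M$, grouping terms gives $x-d_Pv=\sum_{j=1}^d\sum_{k=1}^{c_j}(w_{E_j}-v)$, a representation using $\sum_j c_j$ summands. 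Hence $\sigma(x,d_Pv)\le\sum_{j=1}^d c_j$, and it remains to bound each $c_j$.

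For the key estimate I would bound $c_j$ by a volume. The $j$-th coordinate of $x-d_Pv$ is at most $\Gamma_j:=\max\{$ $j$-th coordinate of $q-d_Pv : q\in d_PP\}$, a maximum attained at a vertex $q$ of $d_PP$ (hence $q\in M$ and $\Gamma_j\in\ZZ$), so $c_j\le\Gamma_j$. Consider the lattice simplex $T_j=\conv(d_Pv,\,d_Pv+e_1,\ldots,d_Pv+e_{j-1},\,q,\,d_Pv+e_{j+1},\ldots,d_Pv+e_d)$. Each auxiliary vertex $d_Pv+e_k=(d_P-1)v+w_{E_k}$ lies in $d_PP$, since $\tfrac{d_P-1}{d_P}v+\tfrac{1}{d_P}w_{E_k}\in P$; thus $T_j\subseteq d_PP$. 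By Lemma \ref{Simplex volume} and unimodularity of $(e_i)$, expanding the determinant along the column $q-d_Pv=\sum_k\Gamma_k e_k$ leaves only the $e_j$ term, so $\Vol(T_j)=|\det(e_1,\ldots,e_{j-1},q-d_Pv,e_{j+1},\ldots,e_d)|=\Gamma_j$. Therefore $\Gamma_j=\Vol(T_j)\le\Vol(d_PP)=d_P^d\Vol(P)$.

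Combining, $\sigma(x,d_Pv)\le\sum_{j=1}^d c_j\le\sum_{j=1}^d\Gamma_j\le d\cdot d_P^d\cdot\Vol(P)$, and taking the maximum over $x$ and $v$ gives $m_P\le d\cdot d_P^d\cdot\Vol(P)$. I expect the volume step to be the main obstacle: one must exhibit a full-dimensional lattice simplex inside $d_PP$ whose normalized volume is exactly the coordinate bound $\Gamma_j$, and verify that its auxiliary vertices genuinely lie in $d_PP$. I would also remark that a sharper bound is available: writing $x=d_Pp$ with $p\in P$ forces $c_j=d_P t_j$, where $t_j$ is the $j$-th coordinate of $p-v$, and the analogous simplex $\conv(v,w_{E_1},\ldots,q^*,\ldots,w_{E_d})\subseteq P$ shows $\gamma_j:=\max_{p\in P}t_j\le\Vol(P)$; this replaces $d_P^d$ by $d_P$ and yields the stronger inequality $m_P\le d\cdot d_P\cdot\Vol(P)$, which a fortiori establishes the stated bound.
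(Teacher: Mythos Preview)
Your proof is correct and follows essentially the same argument as the paper: both use smoothness to write $x-d_Pv$ in the unimodular edge basis at $v$ and bound each coordinate by the normalized volume of a lattice simplex contained in $d_PP$ (the paper phrases this via Cramer's rule and uses $x$ itself as the replacement vertex rather than first passing to your coordinate-maximizing vertex $q$, a purely cosmetic difference). One small slip: writing $q-d_Pv=\sum_k\Gamma_k e_k$ is inaccurate---only the $j$-th coefficient equals $\Gamma_j$---though this does not affect the determinant computation; and your closing observation that working inside $P$ rather than $d_PP$ yields the sharper bound $m_P\le d\cdot d_P\cdot\Vol(P)$ is a valid improvement not present in the paper.
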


\begin{proof}
Let $\mathcal{V}=\{v_1,\cdots,v_n\}$ be the vertices of $P$. For any $x\in d_P\cdot P\cap M$, and $v\in\mathcal{V}$, since $P$ is smooth, we have
\[x-d_P\cdot v=\sum_{i=1}^d a_{E_i}(w_{E_i}-v),\]
for some $a_i\in\ZZ_{\ge 0}$ with $w_{E_i}-v$ is the primitive ray generator of $\Cone(v_{E_i}-v)$, where $v_{E_i}$ is a neighbor of $v$. By Cramer's rule
\[a_{E_i}=\frac{\Delta_{E_i}}{\Delta},\]
where 
\[\Delta=\det(w_{E_1}-v,\cdots,w_{E_d}-v)=1,\text{ and }\]
\[\Delta_{E_i}=\det(w_{E_1}-v,\cdots,w_{E_{i-1}}-v,x-d_P\cdot v,w_{E_{i+1}}-v\cdots,w_{E_d}-v).\] 
By Lemma \ref{Simplex volume}, $\Delta_{E_i}$ is the normalized volume of the simplex 
\[\conv(d_P v,w_{E_1}+(d_P
-1)v,\cdots,w_{E_{i-1}}+(d_P
-1)v,x,w_{E_{i+1}}+(d_P
-1)v,\cdots,w_{E_d}+(d_P
-1)v),\]
which lies inside the polytope $d_P\cdot P$. Thus, $\Delta_{E_i}\le \Vol(d_P\cdot P)=d_P^d\Vol(P)$. Therefore,
\[\sigma(x,d_P\cdot v)=\sum_{i=1}^da_{E_i}\le d\cdot d_P^d\Vol(P).\]
\end{proof}

Let $P$ be a $d$-dimensional smooth lattice polytope. Then for each vertex $v$ of $P$, there exist $d$ neighbor vertices to $v$, say $v_{E_i},\cdots, v_{E_d}$, with $E_i$ is the edge of joining $v$ with $v_{E_i}$. Let $w_{E_i}-v$ be the primitive ray generator of $\Cone(v_{E_i}-v)$. We define the corner of $P$ at $v$, a vertex of $P$, to be
\[C_{v}:=\conv\left(v,w_{E_i},\cdots,w_{E_d}\right),\] and the $\gamma$-scaling of $C_v$ to be
\[C_{v, \gamma}=\conv(v, v+\gamma\cdot(w_{E_1}-v),\cdots, v+\gamma\cdot(w_{E_d}-v)).\]
Then for some $\gamma$ big enough, $C_{v,\gamma}$ contains the whole polytope $P$.

\begin{example}\label{gamma-scaling}
Consider the polytope $P$ given by
\[P=\conv\left(
\begin{matrix}
0 & 2 & 2 & 1 & 0\\
0 & 0 & 1 & 2 &2
\end{matrix}
\right).\]
Let $O$ be the origin and $v=(1,1)^T$. Then 
\[C_{O}=\conv\left(
\begin{matrix}
0 & 1 & 0\\
0 & 0 & 1
\end{matrix}
\right) \text{ and } C_{v}=\conv\left(
\begin{matrix}
2 & 2 & 1\\
1 & 0 & 1
\end{matrix}
\right).\]
We have $C_{v,5}\supseteq P$ and $C_{O,3}\supseteq P$.
	\begin{center}
		\begin{figure}[h]
			\begin{tikzpicture}[scale=0.6]
				\draw [help lines] (0,0) grid (7,10);
				
				\draw [pattern=dots, thick] 
					(0,10)--(5,5)--(5,0)--(0,10);
				%\draw [ pattern=north west lines, thick]
					%(3,4)--(3,7)--(6,4)--(3,4);
				\draw [fill=cyan, very thick, opacity=0.8] 		
					(3,4)--(3,6)--(4,6)--(5,5)--(5,4)--(3,4);
				\node [above right] at (5,5) {$v$};
				\node at (4,5){$P$};
				\node [right] at (3.8,7.5){$C_{v,5}$};
				\draw[->] (4.2,7.2) -- (2.5,6.5);
			\end{tikzpicture}\hspace*{5mm}
			\begin{tikzpicture}[scale=0.6]
				\draw [help lines] (0,0) grid (7,10);
				
				%\draw [pattern=dots, thick] 
					%(0,10)--(5,5)--(5,0)--(0,10);
				\draw [ pattern=north west lines, thick]
					(3,4)--(3,7)--(6,4)--(3,4);
				\draw [fill=cyan, very thick, opacity=0.8] 		
					(3,4)--(3,6)--(4,6)--(5,5)--(5,4)--(3,4);
				%\node [above right] at (5,5) {$v$};
				\node [below left] at (3,4) {$O$};
				\node at (4,5){$P$};
				\node [right] at (3.8,7.5){$C_{O,3}$};
				\draw[->] (4.2,7.2) -- (3.5,6.2);
				%\node [left] at (0,2) {$B$};
				%\node [right] at (3,0) {$C$};
			\end{tikzpicture}	
			%\caption{}	
		\end{figure}
	\end{center}
\end{example}

\begin{proposition}\label{bounds of m1, m2 for smooth polytopes}
Let $P$ be a smooth $d$-dimensional lattice polytope with $\mathcal{V}=\{v_1,\cdots, v_n\}$ the set of its vertices. Let $\gamma$ be the miminum interger such that $P\subseteq C_{v_i,\gamma}$ for every $1\le i\le n$. Then for any $u\in d_P\cdot P\cap M$, $v\in \mathcal{V}$,
\[\sigma(u,d_P\cdot v)\le d_P\cdot \gamma,\]
which implies $m_P \le d_P\cdot \gamma$.
%\begin{enumerate}[(i)]
%\item For any $u\in P\cap M$,
%\[ \tau(u)\le \gamma,\]
%which implies $m_1\le \gamma$.
%\item For any $u\in d_P\cdot P\cap M$, $v\in \mathcal{V}$,
%\[\sigma(u,d_P\cdot v)\le d_P\cdot \gamma,\]
%hence $m_P \le d_P\cdot \gamma$.
%\end{enumerate}
\end{proposition}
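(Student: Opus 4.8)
The plan is to exploit smoothness to pass to the \emph{unique} coordinate representation of $u-d_P v$ in the lattice basis given by the edge directions at $v$, and then to read off the bound directly from the containment $P\subseteq C_{v,\gamma}$ after dilating by $d_P$. The key point is that smoothness makes the coordinate vector at the corner both integral and unique, so the single representation coming from the edge generators already furnishes an admissible expression for $\sigma$, while the corner containment controls the sum of those coordinates.

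In detail, I would fix a vertex $v$ with edge generators $w_{E_1}-v,\ldots,w_{E_d}-v$ and a lattice point $u\in d_P P\cap M$. Smoothness says these generators form a $\ZZ$-basis of $M$ and generate the cone $\RR_{\ge 0}(P-v)$; since $u-d_P v$ lies in this cone and is a lattice point, we may write
\[u-d_P v=\sum_{i=1}^d a_{E_i}(w_{E_i}-v),\qquad a_{E_i}\in\ZZ_{\ge 0},\]
and this coordinate vector is unique. Because each $w_{E_i}=v+(w_{E_i}-v)$ is the first lattice point of $P$ along the edge $E_i$, hence $w_{E_i}\in P\cap M$, the displayed equality is an admissible expression in the sense of the definition of $\sigma$, so that
\[\sigma(u,d_P v)\le \sum_{i=1}^d a_{E_i}.\]
It therefore suffices to bound $\sum_{i}a_{E_i}$ by $d_P\gamma$.

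For this I would dilate the containment $P\subseteq C_{v,\gamma}$ by the factor $d_P$ about the origin, giving $d_P P\subseteq d_P\,C_{v,\gamma}$, and identify the right-hand side as the simplex with apex $d_P v$ and edge vectors $d_P\gamma(w_{E_i}-v)$, namely $\conv(d_P v,\,d_P v+d_P\gamma(w_{E_1}-v),\ldots,d_P v+d_P\gamma(w_{E_d}-v))$. Writing $u\in d_P P$ in barycentric coordinates of this simplex and collecting terms expresses $u-d_P v$ as $\sum_i s_i(w_{E_i}-v)$ with $s_i\ge 0$ and $\sum_i s_i\le d_P\gamma$; by uniqueness of coordinates in the basis $(w_{E_i}-v)$ we conclude $a_{E_i}=s_i$, whence $\sum_i a_{E_i}\le d_P\gamma$. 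Combining with the previous display gives $\sigma(u,d_P v)\le d_P\gamma$, and maximizing over $u$ and $v$ yields $m_P\le d_P\gamma$.

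I do not expect a serious obstacle; the argument is essentially bookkeeping once the geometric picture is fixed. The one point that must be handled with care is the appeal to integrality and uniqueness of the coordinates $a_{E_i}$, which is exactly where smoothness (unimodularity of the corner cone) is used and cannot be dropped: without it the edge generators need not be a lattice basis, the integral expansion of $u-d_P v$ need not agree with the barycentric coordinates computed from $C_{v,\gamma}$, and the clean identity $a_{E_i}=s_i$ would fail. A minor secondary check is that $w_{E_i}$ genuinely lies in $P\cap M$, so that the representation is admissible for $\sigma$; this is immediate from convexity of $P$.
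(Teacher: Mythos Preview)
Your proposal is correct and follows essentially the same approach as the paper: both use smoothness to obtain a unique nonnegative integer expansion $u-d_Pv=\sum a_i(w_{E_i}-v)$, dilate the containment $P\subseteq C_{v,\gamma}$ by $d_P$ to write $u-d_Pv$ in barycentric coordinates of the scaled corner simplex, and then compare the two expressions to conclude $\sum a_i\le d_P\gamma$. Your write-up is slightly more explicit than the paper in noting why $w_{E_i}\in P\cap M$ makes the expansion admissible for $\sigma$, but the argument is otherwise the same.
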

\begin{proof}
%The main idea of the proofs of both parts is to express $u-v$ in two ways and then compare the coefficients, which is possible because $P$ is smooth. 
For any lattice point $u\in mP\cap M$ (with $m\in\ZZ_{\ge 1}$) and vertex $v$ of $mP$, $u$ lies inside the $d$-simplex formed by scaling the corner at $v$ by $m\gamma$. Precisely,
\[u\in  C_{v, m\gamma}=\conv(v, v+m\gamma\cdot(w_{E_1}-v),\cdots, v+m\gamma\cdot(w_{E_d}-v)),\]
where $w_{E_i}-v$ is the primitive ray generator of $v_{E_i}-v$. Equivalently, there are $\lambda_i\ge 0$ with $\sum_{i=0}^d\lambda_i=1$ such that
\begin{equation}
u=\lambda_0v+\sum_{i=1}^d\lambda_i\cdot(v+m\gamma\cdot(w_{E_i}-v))=v+\sum_{i=1}^d \lambda_i\cdot m\cdot \gamma\cdot(w_{E_i}-v).
\end{equation}
Hence,
\begin{equation}\label{first presentation of u-v}
u-v=\sum_{i=1}^d \lambda_i\cdot m\cdot \gamma\cdot (w_{E_i}-v).
\end{equation}
Since $mP$ is smooth at $v$, $C_{v,m\gamma}$ is also smooth at $v$, and we can express $u-v$ uniquely in the form
\begin{equation}\label{second presentation of u-v}
u-v=\sum_{i=1}^d a_i(w_{E_i}-v),
\end{equation}
where $a_i\in\NN$ for $1\le i\le d$. Comparing the coefficients in the equations (\ref{first presentation of u-v}) and (\ref{second presentation of u-v}) yields 
\begin{equation}\label{equality of coefficients of u-v}
a_i=\lambda_i\cdot m\cdot \gamma.\tag{$\dagger$}
\end{equation}
%For part (i), applying (\ref{equality of coefficients of u-v}) for $m=1$ and $u\in P\cap M$, we have $\lambda_i\cdot \gamma\in\NN$ for $1\le i\le d$. %Let $\tau=\gamma \cdot \lcm(l_{E_1},\cdots, l_{E_d})$. 
%Then $\gamma\cdot\lambda_i\in\NN$ for all $i\ge 1$. For $i=0$, we have
%\[\gamma\cdot \lambda_0=\gamma\left(1-\sum_{i=1}^d\lambda_i\right)=\gamma-\sum_{i=1}^d\gamma\cdot\lambda_i\in \NN,\]
%so $\gamma\ge \tau(u)$. 
%Also
%\begin{align*}
%\lcm(l_{E_1},\cdots, l_{E_d})\le 
%\begin{cases}
%\frac{e!}{(e-d)!} &\text{ if }e\ge d\\
%e!& \text{ if } 1\le e<d.
%\end{cases}
%\end{align*}
%The conclusion follows.
Applying (\ref{equality of coefficients of u-v}) for $m=d_P$ and $u\in d_PP\cap M$, we have
\[\sum_{i=1}^da_i=\sum_{i=1}^d d_P\cdot\lambda_i\cdot \gamma =d_P\cdot \gamma\cdot \sum_{i=1}^d\lambda_i\le d_P\cdot \gamma.\]
In other words,
\[\sigma(u,d_P\cdot v)\le d_P\cdot\gamma.\]
In particular, since $m_P$ is the maximum of the $\sigma(u,d_P\cdot v)$, we have
\[m_P\le d_P\cdot\gamma.\]
\end{proof}

As a corollary, we obtain a bound for smooth lattice polytopes as follows.
\begin{corollary}\label{Smooth corollary 1}
Let $P$ be a smooth $d$-dimensional lattice polytope with $n$ vertices, $\gamma$ is the minimum integer such that $P\subseteq C_{v,\gamma}$ for every vertex $v$ of $P$. Then $P$ is $k$-normal for all 
\[k\ge \min\left\{
\begin{array}{l}
d_P(\gamma-1)\cdot n+1,\\
\left( d\cdot d_P^d\cdot \Vol(P)-d_P\right)\cdot n+1
\end{array}
\right\}.\]
\end{corollary}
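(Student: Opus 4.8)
The plan is to obtain this corollary as a direct formal consequence of the master bound in Theorem \ref{New bound of k-normality}, combined with the two bounds on $m_P$ proved in Propositions \ref{Bound of m2 in term of volume} and \ref{bounds of m1, m2 for smooth polytopes}. First I would observe that a smooth lattice polytope is in particular very ample, so $m_P$ is well-defined and Theorem \ref{New bound of k-normality} applies, yielding $k_P\le (m_P-d_P)\cdot n+1$. This is the single inequality from which everything else is read off.

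Next, the key elementary observation is that the right-hand side $(m_P-d_P)\cdot n+1$ is monotone increasing in $m_P$, since $n>0$. Hence \emph{any} upper bound for $m_P$ may be substituted to produce a corresponding upper bound for $k_P$. Feeding in $m_P\le d_P\cdot\gamma$ from Proposition \ref{bounds of m1, m2 for smooth polytopes} gives
\[k_P\le (d_P\cdot\gamma-d_P)\cdot n+1=d_P(\gamma-1)\cdot n+1,\]
while feeding in $m_P\le d\cdot d_P^d\cdot\Vol(P)$ from Proposition \ref{Bound of m2 in term of volume} gives
\[k_P\le \left(d\cdot d_P^d\cdot\Vol(P)-d_P\right)\cdot n+1.\]

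Since $k_P$ is bounded above by each of these two quantities, it is bounded above by their minimum; and because $P$ is $k$-normal for every $k\ge k_P$ by the definition of $k_P$, the stated range of $k$-normality follows at once. There is essentially no obstacle beyond checking that the substitution preserves the inequality and that a minimum of upper bounds is itself an upper bound—both immediate. The genuine content of the result already resides in the two propositions, and this corollary merely records whichever of the volume-based bound or the $\gamma$-scaling bound happens to be sharper for the polytope at hand; accordingly I would keep the proof to a few lines.
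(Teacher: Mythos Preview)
Your proposal is correct and follows essentially the same approach as the paper: the paper's proof simply states that the result ``follows directly from Theorem \ref{New bound of k-normality}, Proposition \ref{Bound of m2 in term of volume}, and Proposition \ref{bounds of m1, m2 for smooth polytopes}.'' Your write-up just makes the substitution and the monotonicity-in-$m_P$ step explicit, which is harmless elaboration of the same argument.
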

\begin{proof}
This follows directly from Theorem \ref{New bound of k-normality}, Proposition \ref{Bound of m2 in term of volume}, and Proposition \ref{bounds of m1, m2 for smooth polytopes}.
\end{proof}
Corollary \ref{Smooth corollary in introduction} follows since $d_P\le d-1$.

\begin{remark}
As a final remark to this session, suppose that $P$ is a $d$-dimensional smooth lattice polytope. Then for any lattice point $u\in P\cap M$ and any vertex $v\in\mathcal{V}$, we have
\begin{equation}
u-v=\sum_{i=1}^d a_i(w_{E_i}-v),
\end{equation}
where $a_i\in \ZZ_{\ge 0}$ and $w_{E_i}-v$ is the primitive generator of $\Cone(v_{E_i}-v)$. Take $m'$ to be the maximal of all such $a_i$; i.e.,
\[m'=\max_{u\in P\cap M, v\in\mathcal{V}}\left\{a_i\hspace{1mm}\Big\vert \hspace{1mm} u-v=\sum_{i=1}^d a_i(w_{E_i}-v)\right\}.\]
Then $m'$ is well-defined because $P$ is a smooth polytope and $|P\cap M|<\infty$. We have $P\cap M\subseteq C_{v,m'}$ for every $v\in\mathcal{V}$. In other words, $\gamma\le m'$.
\end{remark}

\section{The Castelnuovo-Mumford Regularity of Normal Toric Varieties}
In this section, we will give a survey on combinatorial interpretations of the Eisenbud-Goto conjecture. First of all, let us recall the definition of Castelnuovo-Mumford regularity.
\begin{definition}
	Let $X\subset \PP^r$ be a projective variety and $\shF$ a coherent sheaf over $X$. We say that $\shF$ is $k$-regular if
	\[\h^i(X,\shF(k-i))=0\]
	for all $i>0$. 
The regularity of $\shF$, denoted by $\reg(\shF)$, is the minimum number $k$ such that $\shF$ is $k$-regular. We also say that $X$ is $k$-regular if the ideal sheaf $\shI_X$ of $X$ is $k$-regular and use $\reg(X)$ to denote the regularity of $X$ (or of $\shI_X$).
\end{definition}

%There has been a big focus on finding an upper bound for the Castelnuovo-Mumford regularity of varieties in general. Most notably, Eisenbud-Goto (\cite{Eisenbud1984}) conjectured that if $X$ is irreducible and reduced then
%\[\reg(X)\le \deg(X)-\codim(X)+1.\]
%Even though the conjecture fails in general (\cite{McCullough2016}), it has motivated many results on regularity. In some particular cases, the Eisenbud-Goto conjecture is proven to be true: smooth surfaces in characteristic zero (\cite{Lazarsfeld1987}), connected reduced curves (\cite{Giaimo2005}), etc. Furthermore, Eisenbud and Goto (\cite{Eisenbud1984}) proved that their conjecture holds when $X$ is arithmetically Cohen-Macaulay. Therefore, it holds for projectively normal toric varieties since they are arithmetically Cohen-Macaulay (\cite{Hochster1972}). For a more detailed list of cases where the Eisenbud-Goto conjecture holds, please refer to \cite{Nitsche2014}. In addition to those cases, we also have many Eisenbud-Goto-type results. Peeva and Sturmfels proved that for a projective toric variety $X$ of codimension $2$ in $\PP^{d-1}$, not contained in any hyperplane then $\reg(X)\le \deg(X)-1$ (\cite{Peeva1998}, \cite[Theorem 4.2]{Sturmfels1995}). Sturmfels also proved that if $X$ is a projective toric variety in $\PP^{d-1}$ then $\reg(X)\le d\cdot \deg(X)\cdot \codim(X)$ (\cite[Theorem 4.5]{Sturmfels1995}). 

Regularity and $k$-normality are closely related by the well-known fact as noted, for example, in \cite{Kwak1998} as follows.
%To prove this, we need the following lemma:

%\begin{lemma}\label{H^i(O(k-i))=0}[Hartshrone]
%Let $k$ be a positive integer. Then for any $i\ge 1$, we have
%\[\h^i(\PP^r,\shO_{\PP^r}(k-i))=0.\]
%\end{lemma}
%\begin{proof}
%For this proof, we emulate the arguments in \cite[Theorem 2.5]{Batyrev1996}. From \cite[Example 9.2.8]{Cox2011}, we have $\h^i(\PP^r,\shO_{\PP^r}(k-i)=0$ for $i\neq r$. For $i=r$, if $k\ge i$ then we also have $\h^i(\PP^r,\shO_{\PP^r}(k-i)=0$ by the vanishing of higher cohomology of globally generated line bundles on toric varieties (\cite[3.5]{Fulton1993}). If $k\le i$, let $\Delta_r$ be the standard $r$-simplex, then
%\[\dim\h^i(\PP^r,\shO_{\PP^r}(k-i)=|((r-k)\Delta_r)^o\cap M|,\]
%where $((r-k)\Delta_r)^o$ is the interior of the $(r-k)\Delta_r$. Now since $\deg(\Delta_r)=0$ by \cite[Proposition 1.4]{Batyrev2007}, $(r-k)\Delta_r$ has no interior lattice points. Therefore, $\h^i(\PP^r,\shO_{\PP^r}(k-i)=0$ in this case as well. The conclusion follows.
%\end{proof}

\begin{proposition}\label{Mumford regularity equivalency}
	Let $X\subseteq \PP^r$ be an irreducible projective variety. Then for $k\in\ZZ_{\ge 1}$, $X$ is $(k+1)$-regular if and only if $X$ is $k$-normal and $\shO_X$ is $k$-regular.
\end{proposition}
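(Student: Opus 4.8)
The plan is to derive both implications at once from the long exact cohomology sequence attached to the structure sequence of $X$ in $\PP^r$. Writing $\iota\colon X\hookrightarrow\PP^r$ for the inclusion, I would start from the short exact sequence of sheaves on $\PP^r$
\[0\to\shI_X\to\shO_{\PP^r}\to\iota_*\shO_X\to 0,\]
twist it by $\shO_{\PP^r}(m)$, and take cohomology. Since $\iota$ is a closed immersion, hence affine, one has $\h^p(\PP^r,\iota_*\shO_X(m))=\h^p(X,\shO_X(m))$ for all $p$, so the long exact sequence reads
\[\cdots\to\h^p(\PP^r,\shO_{\PP^r}(m))\to\h^p(X,\shO_X(m))\to\h^{p+1}(\PP^r,\shI_X(m))\to\h^{p+1}(\PP^r,\shO_{\PP^r}(m))\to\cdots.\]
Everything will then rest on the classical computation of the cohomology of line bundles on $\PP^r$.

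The key numerical input I would record first is exactly that computation: the intermediate groups $\h^p(\PP^r,\shO_{\PP^r}(m))$ with $1\le p\le r-1$ vanish for every $m$, while the top group $\h^r(\PP^r,\shO_{\PP^r}(m))$ is nonzero only for $m\le -r-1$. In particular, because $k\ge 1$, one gets $\h^p(\PP^r,\shO_{\PP^r}(k-j))=0$ for all $p\ge 1$ and all $0\le j\le p$, since then $k-j\ge k-p\ge 1-r>-r-1$ rules out the top degree. This is precisely where the hypothesis $k\ge 1$ enters, and I expect this top-degree bookkeeping to be the only genuinely delicate point.

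Next I would unwind the two halves of the statement degree by degree. For the degree $i=1$ condition in the $(k+1)$-regularity of $\shI_X$, taking $m=k$ and $p=0$ gives
\[\h^0(\PP^r,\shO_{\PP^r}(k))\to\h^0(X,\shO_X(k))\to\h^1(\PP^r,\shI_X(k))\to\h^1(\PP^r,\shO_{\PP^r}(k))=0,\]
so $\h^1(\PP^r,\shI_X(k))$ is the cokernel of the restriction map and thus vanishes precisely when $X$ is $k$-normal. For $2\le i\le r$, setting $p=i-1\ge 1$ and $m=k-p$, the vanishing recorded above kills both $\shO_{\PP^r}$-terms flanking $\h^{i}(\PP^r,\shI_X(k+1-i))$ (the left one is $\h^p(\shO_{\PP^r}(k-p))$ and the right one is $\h^{p+1}(\shO_{\PP^r}(k-p))$, both of the form $\h^{\ge 1}(\shO_{\PP^r}(k-j))$ with $j\le$ degree), producing the isomorphism
\[\h^{i}(\PP^r,\shI_X(k+1-i))\cong\h^{i-1}(X,\shO_X(k-(i-1))).\]

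Finally I would reconcile the index ranges. Regularity of $\shI_X$ only imposes conditions for $1\le i\le r$, since $\shI_X$ lives on the $r$-dimensional space $\PP^r$; the isomorphisms above show that the conditions for $2\le i\le r$ are exactly $\h^q(X,\shO_X(k-q))=0$ for $1\le q\le r-1$. As $X$ is a proper irreducible subvariety we have $\dim X\le r-1$, so $\h^q(X,\shO_X(\cdot))$ vanishes automatically for $q>\dim X$; hence these conditions are equivalent to $\h^q(X,\shO_X(k-q))=0$ for all $q\ge 1$, i.e. to $\shO_X$ being $k$-regular. Combining the $i=1$ equivalence with the equivalences for $2\le i\le r$ yields that $\shI_X$ is $(k+1)$-regular if and only if $X$ is $k$-normal and $\shO_X$ is $k$-regular, as claimed.
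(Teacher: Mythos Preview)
Your proof is correct and follows essentially the same approach as the paper: both arguments take the long exact cohomology sequence of the twisted ideal-sheaf sequence $0\to\shI_X\to\shO_{\PP^r}\to\shO_X\to 0$ and use the standard vanishing of $\h^p(\PP^r,\shO_{\PP^r}(m))$ for $p\ge 1$ to compare the $\shI_X$ and $\shO_X$ conditions. Your write-up is in fact more careful than the paper's---you explicitly track why the top-degree group $\h^r(\PP^r,\shO_{\PP^r}(k+1-i))$ vanishes (using $k\ge 1$) and you reconcile the index ranges via $\dim X\le r-1$, points the paper leaves implicit---but there is no substantive difference in strategy.
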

\begin{proof}
It is clear that $X$ is $k$-normal if and only if $\h^1(\shI_X(k))=0$. We have an exact sequence
	\[0\rightarrow \shI_X\rightarrow \shO_X\rightarrow \shO_{\PP^r}\rightarrow 0.\]
Suppose that $X$ is $(k+1)$-regular; i.e., $\h^i(\shI_X(k+1-i))=0$ for all $i\ge 1$. Taking the long exact sequence of the cohomology, we see that for $i=1$, $\h^1(\shI_X(k))=0$; i.e., $X$ is $k$-normal; and that $\h^{i-1}(\shO_X(k+1-i))=0$ since $\h^i(\shO_{\PP^r}(k+1-i))=0$  for $i\ge 2$. In other words, $\shO_X$ is $k$-regular.

Conversely, suppose that $X$ is $k$-normal and $\shO_X$ is $k$-regular. From the long exact sequence of the cohomology, we have for all $i\ge 2$,  $\h^i( \shI_X(k+1-i))=0$ since $\h^{i}(\shO_{\PP^r}(k+1-i))=0$ and $\h^{i-1}(\shO_X(k+1-i))=0$ by the hypothesis that $\shO_X$ is $k$-regular. The case $\h^1(\shI_X(k))=0$ follows from the assumption that $X$ is $k$-normal. Hence, $X$ is $(k+1)$-regular. The conclusion follows.
\end{proof}

As a corollary, we obtain an equation of $\reg(X)$ in terms of $\reg(\shO_X)$ and $k_X$ for any irreducible projective variety $X$.
%\begin{lemma}\label{Lemma kP<=reg(X)+1}
%	Let $X\subset\PP^r$ be a projective variety. Then 
%	\begin{equation}\label{kP<=reg(X)+1}		
%		k_X\le \reg(X)-1.	
%	\end{equation}
%\end{lemma}
%\begin{proof}
%	By a result attributed to Castelnuovo by Mumford (\cite[Lecture 14]{Mumford1966}), if $X$ is $k$-regular then $X$ is $(k+1)$-regular. Therefore, by Lemma \ref{Mumford regularity equivalency}, $X$ is $k$-normal for all $k\ge \reg(X)-1$. Hence, $\reg(X)-1\ge k_X$.
%\end{proof}

\begin{proposition}\label{reg(X)=max(k_X,reg(O_X))+1}
Let $X\subseteq\PP^r$ be an irreducible projective variety. Then
\[\reg(X)=\max\{\reg(\shO_X),k_X\}+1.\]
\end{proposition}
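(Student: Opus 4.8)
The plan is to deduce the formula directly from the regularity–normality dictionary of Proposition \ref{Mumford regularity equivalency}, combined with the standard fact (Mumford) that Castelnuovo–Mumford regularity is upward closed: if a coherent sheaf $\shF$ on $\PP^r$ is $p$-regular then it is $(p+1)$-regular, so that $\shF$ is $q$-regular for every $q\ge \reg(\shF)$. This is precisely what makes $\reg(\shF)$ well defined as a minimum, and it is the classical input I would invoke without reproof. Write $m=\max\{\reg(\shO_X),k_X\}$ and record at the outset that $m\ge k_X\ge 1$, so the index $m$ is admissible in Proposition \ref{Mumford regularity equivalency}.

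For the upper bound $\reg(X)\le m+1$ I would apply Proposition \ref{Mumford regularity equivalency} with $k=m$. Since $m\ge k_X$, the variety $X$ is $m$-normal by the definition of $k_X$; since $m\ge \reg(\shO_X)$, upward closure shows $\shO_X$ is $m$-regular. Hence $X$ is $(m+1)$-regular, giving $\reg(X)\le m+1$.

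For the reverse inequality I would set $R=\reg(X)$ and prove $R-1\ge \reg(\shO_X)$ and $R-1\ge k_X$ separately, which together yield $R-1\ge m$. Applying Proposition \ref{Mumford regularity equivalency} with $k=R-1$ to the $R$-regular variety $X$ extracts that $\shO_X$ is $(R-1)$-regular, i.e. $\reg(\shO_X)\le R-1$. For the normality estimate, take any integer $k\ge R-1$; then $k+1\ge R$, so $X$ is $(k+1)$-regular by upward closure, and Proposition \ref{Mumford regularity equivalency} forces $X$ to be $k$-normal. Thus $X$ is $k$-normal for every $k\ge R-1$, so $k_X\le R-1$ by the definition of $k_X$. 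Combining the two inequalities gives $\reg(X)=m+1$.

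The single point requiring care — and the only real obstacle — is the boundary index in Proposition \ref{Mumford regularity equivalency}, which is available only for $k\ge 1$: each application above must be checked to use an admissible index, and this is exactly why one first records $k_X\ge 1$ (so $m\ge 1$) and $R=\reg(X)\ge 2$ (so $R-1\ge 1$). The latter is where the standing hypothesis matters: because $X\subseteq \PP^r=\PP(\h^0(X,L))$ is embedded by the complete linear system of a very ample bundle it is non-degenerate, and being a proper subvariety it is not a linear subspace, whence $\reg(X)\ge 2$. Without this one would have counterexamples (a linear space has $\reg(X)=1<m+1$), so establishing $R\ge 2$ is the step I would flag as genuinely load-bearing rather than routine.
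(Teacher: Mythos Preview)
Your proof is correct and follows essentially the same route as the paper: both directions come from Proposition~\ref{Mumford regularity equivalency} together with upward closure of regularity (Mumford's lemma). Your argument is in fact more careful than the paper's, since you explicitly justify $\reg(X)\ge 2$ so that the index $k=R-1$ is admissible in Proposition~\ref{Mumford regularity equivalency}; the paper's proof tacitly uses this boundary case without comment.
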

\begin{proof}
By a result attributed to Castelnuovo by Mumford (\cite[Lecture 14]{Mumford1966}), if $X$ is $k$-regular then $X$ is $(k+1)$-regular. Therefore, for any $k\ge \max\{\reg(\shO_X), k_X\}$ we have $X$ is $k$-normal and $\shO_X$ is $k$-regular. By Proposition \ref{Mumford regularity equivalency}, $X$ is $(k+1)$-regular. Thus,
\[\reg(X)\le \max\{\reg(\shO_X),k_X\}+1.\]
Now suppose that $k\le \max\{\reg(\shO_X),k_X\}-1$. Then either $X$ is not $k$-normal or $\shO_X$ is not $k$-regular. Hence, $\shI_X$ is not $(k+1)$-regular by Proposition \ref{Mumford regularity equivalency}. Therefore,
\[\reg(X)\ge \max\{\reg(\shO_X),k_X\}+1.\]
The conclusion follows.
\end{proof}

%\begin{lemma}\label{Lemma kP<=reg(X)+1}
%	Let $X\subset\PP^r$ be a normal projective toric variety and $P$ the corresponding lattice polytope of the embedding. Then 
%	\begin{equation}\label{kP<=reg(X)+1}		
%		d_P\le k_P\le \reg(X)-1.	
%	\end{equation}
%\end{lemma}
%\begin{proof}
%	By a result attributed to Castelnuovo by Mumford (\cite[Lecture 14]{Mumford1966}), if $X$ is $k$-regular then $X$ is $(k+1)$-regular. Therefore, by Lemma \ref{Mumford regularity equivalency}, $P$ is $k$-normal for all $k\ge \reg(X)-1$. Hence, $\reg(X)-1\ge k_P$. On the other hand, it follows easily from the definitions of $k_P$ and $d_P$ that $k_P\ge d_P$. Thus,
%	\[d_P\le k_P\le \reg(X)-1.\]
%\end{proof}
Now we give a combinatorial interpretation of $\reg(\shO_X)$ for $X$ a normal projective toric variety.
\begin{proposition}\label{O_X is k-regular iff k>=deg(P)}
Let $X\subset \PP^r$ be a normal projective toric variety with $P$ the corresponding lattice polytope of the embedding. Then $\shO_X$ is $k$-regular if and only if $k\ge \deg(P)$. In other words, $\reg(\shO_X)=\deg(P)$.
\end{proposition}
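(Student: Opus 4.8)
The plan is to reduce the $k$-regularity of $\shO_X$ to the vanishing of a single cohomology group and then evaluate that group combinatorially. By definition $\shO_X$ is $k$-regular exactly when $\h^i(X,\shO_X(k-i))=0$ for all $i>0$, and since $\dim X=d$ only the range $1\le i\le d$ is relevant. First I would compute the cohomology of the line bundles $\shO_X(j)=\shO_X(jD)$ (these are genuine line bundles for every $j\in\ZZ$ because $D$ is very ample, hence Cartier) in three regimes. For $j\ge 1$ the divisor $jD$ is ample and Demazure vanishing gives $\h^i(X,\shO_X(j))=0$ for all $i>0$; for $j=0$ completeness of $X$ gives $\h^i(X,\shO_X)=0$ for $i>0$; and for $j<0$ I would use Serre duality, which is available since normal toric varieties are Cohen--Macaulay, together with the toric Kodaira (Batyrev--Borisov) vanishing theorem.

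Writing $\omega_X=\shO_X(K_X)=\shO_X(-\sum_\rho D_\rho)$ for the dualizing sheaf and $\omega_X(m)=\omega_X\otimes\shO_X(m)$, Serre duality gives
\[
\h^i(X,\shO_X(-m))^{*}\cong \h^{d-i}(X,\omega_X(m)),
\]
and the vanishing $\h^p(X,\omega_X\otimes(\text{ample}))=0$ for $p>0$ shows that, for $m>0$, the only possibly nonzero term is $\h^d(X,\shO_X(-m))\cong \h^0(X,\omega_X(m))^{*}$. A direct character count identifies $\h^0(X,\omega_X(mD))$ with the $\chi^{u}$ for which $\langle u,u_\rho\rangle>-ma_\rho$ for every ray $\rho$, that is, with the interior lattice points of $mP$; hence $\dim\h^d(X,\shO_X(-m))=|\operatorname{int}(mP)\cap M|$ (equivalently, once the middle cohomology is known to vanish this follows from $\h^d=(-1)^d\chi$ and Ehrhart--Macdonald reciprocity). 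Combining the three regimes, $\h^i(X,\shO_X(k-i))$ vanishes for every $0<i<d$ and every $k$, so $\shO_X$ is $k$-regular if and only if $\h^d(X,\shO_X(k-d))=0$; this group is $0$ for $k\ge d$ and equals $|\operatorname{int}((d-k)P)\cap M|$ for $k<d$.

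Finally I would match this with $\deg(P)$. Let $c$ be the smallest positive integer with $|\operatorname{int}(cP)\cap M|\neq 0$; the classical codegree bound gives $c\le d+1$, and $c=1$ precisely when $P$ has an interior lattice point. Since a dilation $mP$ acquires an interior lattice point for every $m\ge c$, for $k<d$ the group $\h^d(X,\shO_X(k-d))$ vanishes iff $d-k<c$, i.e.\ iff $k\ge d+1-c$. One checks straight from the stated definition that $\deg(P)=d+1-c$ (in particular $\deg(P)=d$ when $c=1$), so in all cases $\shO_X$ is $k$-regular if and only if $k\ge \deg(P)$, whence $\reg(\shO_X)=\deg(P)$. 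The hard part is assembling the toric cohomology on a possibly singular normal variety: establishing the middle-degree vanishing $\h^i(X,\shO_X(j))=0$ for $0<i<d$ and $j<0$ requires a Kodaira-type theorem valid for toric varieties (through their rational singularities), and one must pin down the top cohomology as an exact interior-lattice-point count; after that, reconciling the result with the piecewise definition of $\deg(P)$ is routine bookkeeping.
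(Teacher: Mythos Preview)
Your argument is correct and lands on the same reduction as the paper: $k$-regularity of $\shO_X$ is governed entirely by $\h^d(X,\shO_X(k-d))$, which in turn counts interior lattice points of $(d-k)P$ and hence is controlled by $\deg(P)$ via the codegree relation $\deg(P)=d+1-c$.

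The only real difference is in packaging. The paper dispatches the ``if'' direction by citing \cite[Theorem IV.5]{Hering2006} and then handles ``only if'' in one line by noting $\h^d(X,\shO_X(k-d))\neq 0$ when $k\le\deg(P)-1$. You instead give a self-contained proof of both directions: Demazure vanishing for $j>0$, rational singularities for $j=0$, and Serre duality plus toric Kodaira vanishing to kill the intermediate cohomology and identify the top cohomology with interior lattice points for $j<0$. Your route is longer but transparent and avoids the external reference; the paper's route is terser but requires the reader to unpack Hering's result. One minor point: your claim that $mP$ has an interior lattice point for every $m\ge c$ (monotonicity of the codegree threshold) is standard but worth a one-line justification, e.g.\ via Ehrhart--Macdonald reciprocity and nonnegativity of the $h^*$-vector.
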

\begin{proof}
The ``if" direction essentially follows from \cite[Theorem IV.5]{Hering2006}. Conversely, suppose that $k\le \deg(P)-1$. Then we have
\begin{align*}
\dim \h^d(X,\shO_X(k-d)=|(d-k)P\cap M|\neq 0
\end{align*}
since $d-k\ge d-\deg(P)+1$. Hence, $\shO_X$ is not $k$-regular. The conclusion follows.
\end{proof}
%
%Now let us recall the definition of the degree of a lattice polytope.
%\begin{definition}
%	Let $P$ be a $d$-dimensional lattice polytope. If $P$ has no interior lattice points, we define the degree of $P$, denoted by $\deg(P)$, to be the smallest non-negative integer $i$ such that $kP$ contains no interior lattice points for $1\le k\le d-i$. If $P$ has any interior lattice point then we define $\deg P=d$.
%\end{definition}
%Notice that $\deg(P)$ is different from $\deg(X_P)$, where $X_P$ is the toric variety associated to $P$. 
Combining Propositions \ref{reg(X)=max(k_X,reg(O_X))+1} and \ref{O_X is k-regular iff k>=deg(P)}, we obtain a combinatorial relation between $\reg(X)$, $k_P$, and $\deg(P)$, the degree of $P$.

\begin{proposition}\label{kP=reg(X)+1}
	Let $X\subset \PP^r$ be a $d$-dimensional normal projective toric variety $X$ and $P$ the corresponding lattice polytope of the embedding of $X$. Then
	\[\reg(X)=\max\{k_P,\deg(P)\}+1.\] 
\end{proposition}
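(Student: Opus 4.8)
The plan is to combine the two preceding propositions with the identification of $k_X$ and $k_P$ recorded in the introduction; no new machinery is needed. First I would invoke Proposition \ref{reg(X)=max(k_X,reg(O_X))+1}, which holds for any irreducible projective variety, to write
\[\reg(X)=\max\{\reg(\shO_X),k_X\}+1.\]
Since a normal toric variety is in particular irreducible, this step applies verbatim.

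Next I would rewrite the term $\reg(\shO_X)$ using Proposition \ref{O_X is k-regular iff k>=deg(P)}, which is stated precisely for normal projective toric varieties and gives $\reg(\shO_X)=\deg(P)$. Substituting yields
\[\reg(X)=\max\{\deg(P),k_X\}+1.\]
Finally I would appeal to the observation from the introduction that, for a normal projective toric variety embedded by a very ample line bundle with associated lattice polytope $P$, the variety $X$ is $k$-normal if and only if $P$ is $k$-normal; consequently the minimal thresholds agree, $k_X=k_P$. Replacing $k_X$ by $k_P$ gives the claimed equality
\[\reg(X)=\max\{k_P,\deg(P)\}+1.\]

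I expect no serious obstacle, since both nontrivial inputs are already in hand: Proposition \ref{reg(X)=max(k_X,reg(O_X))+1} is obtained from the Castelnuovo lemma together with the cohomology long exact sequence underlying Proposition \ref{Mumford regularity equivalency}, while Proposition \ref{O_X is k-regular iff k>=deg(P)} rests on the toric vanishing result cited there. The only points demanding care are bookkeeping in nature, namely verifying that the irreducibility hypothesis of Proposition \ref{reg(X)=max(k_X,reg(O_X))+1} and the normality hypothesis of Proposition \ref{O_X is k-regular iff k>=deg(P)} are both supplied by the assumption that $X$ is a normal projective toric variety, and that the equality $k_X=k_P$ is legitimately invoked. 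All three are immediate, so the proof reduces to a clean substitution.
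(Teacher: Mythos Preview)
Your proposal is correct and follows essentially the same route as the paper: the paper's proof simply states that the result follows directly from Propositions \ref{reg(X)=max(k_X,reg(O_X))+1} and \ref{O_X is k-regular iff k>=deg(P)}, leaving implicit the identification $k_X=k_P$ that you spell out. Your version is just a more explicit unpacking of the same two-step substitution.
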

\begin{proof}
This follows directly from  Propositions \ref{reg(X)=max(k_X,reg(O_X))+1} and \ref{O_X is k-regular iff k>=deg(P)}.
%%	By \cite[Theorem IV.5]{Hering2006}, we have that $\h^i(X,\shO_X(\deg(P)-i))=0$ for all $i>0$. Therefore, for any $k\ge \deg(P)$, we have
%%	\[\h^i(X,\shO_X(k-i))=0\]
%%for all $i>0$ by \cite[Theorem 1.2]{Gallego1999}. Suppose now that $k_P\le \deg(P)$. By Lemma \ref{Mumford regularity equivalency}, we have that $\reg(X)\le \deg(P)+1.$ On the other hand, 
%%\begin{align*}
%%\dim \h^d(X,\shO_X((\deg(P)-1)-d))=|(d+1-\deg(P))P\cap M|\neq 0
%%\end{align*}
%%since $(d-(\deg(P)-1))P$ has an interior lattice point by the definition of $\deg(P)$, thus $X$ is not $(\deg(P)-1)$-regular. Therefore, $\reg(X)=\deg(P)+1$. Now if $k_P\ge \deg P$, it follows that $\h^i(X,\shO_X(k_P-i))=0$ for all $i>0$. Therefore, $X$ is $k_P$-regular by \ref{Mumford regularity equivalency}; i.e., $\reg(X)\le k_P+1$. Apply Lemma \ref{Lemma kP<=reg(X)+1}, we then have $\reg(X)=k_P+1$. Combining those cases, we have
%If $k_P\ge\deg(P)$, then we have $O_X$ is $k$-regular by Proposition \ref{O_X is k-regular iff k>=deg(P)}. Then by Proposition \ref{Mumford regularity equivalency}, $\reg(X)\le k_P+1$. It follows from Lemma \ref{Lemma kP<=reg(X)+1} that $\reg(X)=k_P+1$ in this case. Now if $\deg(P)\ge k_P$, then we know that $O_X$ is $\deg(P)$-regular and $X$ is $k$-normal, so $\reg(X)\le \deg(P)+1$. On the other hand, $O_X$ is not $(\deg(P)-1)$-regular by Proposition \ref{O_X is k-regular iff k>=deg(P)}. Thus, $\reg(X)=\deg(P)+1$. Combining those two cases, we have
%\[\reg(X)=\max\{k_P,\deg(P)\}+1.\] 
\end{proof}

Notice that $\deg(P)\le d$. Thus, using the upper bound of $k_P$ we obtained in Theorem \ref{Bound of k-normality} for Proposition \ref{kP=reg(X)+1}, we obtain an upper bound for $\reg(X)$: 

\begin{corollary}\label{corollary on reg (X)}
	Let $X\subset\PP^r$ be a normal projective toric variety with $P$ the corresponding lattice polytope of the embedding. Suppose that $P$ has $n$ vertices. Then
	\[\reg(X)\le (m_P-d_P)n+1,\]
where $m_P$ and  $d_P$ are defined as in Theorem \ref{New bound of k-normality}.
\end{corollary}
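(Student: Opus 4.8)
The plan is to feed the combinatorial bound on $k_P$ into the identity $\reg(X)=\max\{k_P,\deg(P)\}+1$ supplied by Proposition \ref{kP=reg(X)+1}. A normal projective toric variety is in particular very ample, so Theorem \ref{New bound of k-normality} applies to its defining polytope $P$. Thus the entire problem reduces to showing that both $k_P$ and $\deg(P)$ are bounded above by $(m_P-d_P)\,n$; the claimed inequality then drops out by taking the maximum and adding $1$.

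First I would dispose of the degree term. The quantity $\deg(P)$ is always at most $d$ (recorded just before the statement; it is the equality $\deg(P)=\reg(\shO_X)$ of Proposition \ref{O_X is k-regular iff k>=deg(P)} together with $\reg(\shO_X)\le d$). Because $P$ is a $d$-dimensional polytope it has at least $d+1$ vertices, so $n\ge d+1$, i.e. $d\le n-1$. Hence $\deg(P)\le n-1<n\le (m_P-d_P)\,n$ precisely when $m_P-d_P\ge 1$, which by Lemma \ref{d_P <= m_P <= k_P} (\ref{2.4.e}) is exactly the non-normal case.

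The delicate point is the bound on $k_P$. The most direct estimate, Corollary \ref{Bound of k-normality}, gives only $k_P\le (m_P-d_P)\,n+1$, which overshoots the target by one and is therefore not quite enough. To close this gap I would instead invoke the sharper, non-normal half of Theorem \ref{New bound of k-normality}, namely $k_P\le (m_P-d_P-1)\,n+\nu_P+1$, and then use the structural bound $\nu_P\le n-1$ noted right after the definition of $\nu_P$. Substituting yields $k_P\le (m_P-d_P-1)\,n+(n-1)+1=(m_P-d_P)\,n$, matching the degree estimate exactly. Combining the two gives $\max\{k_P,\deg(P)\}\le (m_P-d_P)\,n$, and Proposition \ref{kP=reg(X)+1} then finishes the non-normal case.

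I expect the main obstacle to be precisely this off-by-one phenomenon: the headline bound of Corollary \ref{Bound of k-normality} is too coarse, and one must recognize that it is the refined non-normal inequality of Theorem \ref{New bound of k-normality}, combined with $\nu_P\le n-1$, that recovers the crucial unit. The remaining care lies in the normal case $m_P=d_P$, where the right-hand side collapses to $1$ while $k_P=1$ and $\reg(X)=\max\{1,\deg(P)\}+1$; this case does not fit the generic pigeonhole argument and has to be handled directly from $\deg(P)=\reg(\shO_X)$ rather than folded into the estimate above.
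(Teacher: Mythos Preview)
Your argument in the non-normal case is correct and is in fact more careful than the paper's own proof, which is the one-liner ``This follows directly from Propositions \ref{Bound of k-normality}, \ref{kP=reg(X)+1}, and the fact that $\deg(P)\le \dim(P)$.'' Taken literally, those three ingredients only yield $\reg(X)\le (m_P-d_P)\,n+2$, since Corollary \ref{Bound of k-normality} bounds $k_P$ by $(m_P-d_P)\,n+1$ rather than $(m_P-d_P)\,n$. You are right that the extra unit must be recovered from the sharper non-normal inequality of Theorem \ref{New bound of k-normality} together with $\nu_P\le n-1$; this gives $k_P\le(m_P-d_P)\,n$, and then $\deg(P)\le d\le n-1<(m_P-d_P)\,n$ finishes the job. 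So in the non-normal case your route is the honest one, and it is what the paper's proof is tacitly relying on.

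Your hesitation about the normal case is also justified, and it is a gap in the \emph{statement}, not merely a gap in your argument. When $P$ is normal we have $m_P=d_P$, so the claimed bound reads $\reg(X)\le 1$; but for the unit square $P\subset\RR^2$ one has $k_P=1$, $\deg(P)=1$, hence $\reg(X)=\max\{1,1\}+1=2>1$. No amount of ``handling $\deg(P)=\reg(\shO_X)$ directly'' will rescue this, because any normal polytope with $\deg(P)\ge 1$ is already a counterexample. So the corollary should be read as a statement about non-normal very ample polytopes (the case of interest, where the bound is nontrivial), and your proof covers exactly that case.
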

\begin{proof}
	This follows directly from Propositions \ref{Bound of k-normality}, \ref{kP=reg(X)+1}, and the fact that $\deg(P)\le \dim(P)$.
\end{proof}

By \cite[Proposition 2.2]{Hofscheier2017}, we have that for $P$ a very ample lattice polytope,
\[\deg(P)\le \Vol(P)-|P\cap M|+d+1.\]
Combining this with Proposition \ref{kP=reg(X)+1}, we have the following corollary.
\begin{corollary}\label{EG for deg P>kP}
Let $X\subset \PP^r$ be a normal projective toric variety with $P$ the corresponding lattice polytope of the embedding. Suppose that $k_P\le \dim(X)$; i.e., $P$ is $\dim(X)$-normal, then 
	\[\reg(X)\le \dim (X)+1.\]
	Furthermore, if $k_P\le \deg(P)$, then
	\[\reg(X)\le \min\{\dim(X)+1,\deg(X)-\codim(X)+1\}.\]
\end{corollary}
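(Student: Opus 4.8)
The plan is to derive everything from Proposition \ref{kP=reg(X)+1}, which already expresses $\reg(X)=\max\{k_P,\deg(P)\}+1$, so that the corollary reduces to bounding the two quantities $k_P$ and $\deg(P)$ under the stated hypotheses and then translating the resulting polytopal bound into the invariants $\deg(X)$ and $\codim(X)$.

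For the first assertion, I would simply observe that $\deg(P)\le\dim(X)$ always holds (as noted before Corollary \ref{corollary on reg (X)}, since $\deg(P)\le d$ by the very definition of $\deg(P)$). Combined with the hypothesis $k_P\le\dim(X)$, this gives $\max\{k_P,\deg(P)\}\le\dim(X)$, and Proposition \ref{kP=reg(X)+1} immediately yields $\reg(X)\le\dim(X)+1$.

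For the second assertion, I would first note that the additional hypothesis $k_P\le\deg(P)$ forces $\max\{k_P,\deg(P)\}=\deg(P)$, so that Proposition \ref{kP=reg(X)+1} gives $\reg(X)=\deg(P)+1$. Since $k_P\le\deg(P)\le d=\dim(X)$, the hypothesis of the first part still holds, giving the bound $\reg(X)\le\dim(X)+1$. To obtain the second term of the minimum, I would invoke $\deg(P)\le\Vol(P)-|P\cap M|+d+1$ from \cite[Proposition 2.2]{Hofscheier2017} and then substitute the combinatorial dictionary $\deg(X)=\Vol(P)$ and $\codim(X)=|P\cap M|-d-1$. The key bookkeeping step is the identity $\Vol(P)-|P\cap M|+d+1=\deg(X)-\codim(X)$, which turns Hofscheier's inequality into $\deg(P)\le\deg(X)-\codim(X)$, hence $\reg(X)=\deg(P)+1\le\deg(X)-\codim(X)+1$. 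Taking the minimum of the two bounds finishes the argument.

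The main obstacle is really a routine but easy-to-botch bookkeeping step rather than a genuine difficulty: the translation of Hofscheier's polytopal inequality into the invariants of $X$. One must correctly use $|P\cap M|=\codim(X)+d+1$ together with $\Vol(P)=\deg(X)$ so as to land exactly on $\deg(X)-\codim(X)+1$ rather than an off-by-one variant. Everything else follows formally from Proposition \ref{kP=reg(X)+1} and the elementary observation $\deg(P)\le d$.
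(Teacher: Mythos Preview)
Your proposal is correct and matches the paper's own proof essentially line for line: both parts are derived from Proposition~\ref{kP=reg(X)+1} together with $\deg(P)\le d$ and \cite[Proposition~2.2]{Hofscheier2017}, with the same translation $\Vol(P)-|P\cap M|+d+1=\deg(X)-\codim(X)$ at the end. If anything, you are slightly more explicit than the paper about the bookkeeping in that last identity.
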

\begin{proof}
Let $d=\dim(X)=\dim(P)$. If $k_P\le d$, then since $\deg(P)\le d$ by definition, $\max\{k_P,\deg(P)\}\le d$. Hence, by Proposition \ref{kP=reg(X)+1},
\[\reg(X)\le \dim (X)+1.\]
Now suppose that $\deg(P)\ge k_P$. Then $k_P\le d$, so we only need to show that $\reg(X)\le\deg(X)-\codim(X)+1\}$ by the argument above. By Proposition \ref{kP=reg(X)+1}, $\reg(X)=\deg(P)+1$. Now we since $\deg(P)\le \Vol(P)-|P\cap M|+d+1$ by \cite[Proposition 2.2]{Hofscheier2017}, it follows that 
\[\reg(X)\le\deg(X)-\codim(X)+1\}.\]
The conclusion follows.
\end{proof}

%Combining Proposition \ref{EG for deg P>kP} with \cite[Corollary 6]{Haase2017}, we have the following corollary. 
%\begin{corollary}
%	Let $L$ be an ample line bundle on a $d$-dimensional projective variety $X$. Suppose that $L\cdot C\ge 2d(d+1)$ for any torus invariant curve $C\subseteq X$. Then 
%	\[\reg(L)\le d+1.\]
%\end{corollary}
%\begin{proof}
%	Let $P$ be the corresponding lattice polytope of $L$. Then $P$ is normal by 
%\end{proof}

\begin{remark}
If $k_P=1$, Proposition \ref{EG for deg P>kP} implies the well-known fact that the Eisenbud-Goto conjecture holds for projectively normal toric varieties (\cite{Eisenbud1984, Hochster1972}).
\end{remark}

\begin{remark}\label{Combinatorial EG}
	By Propositions \ref{kP=reg(X)+1} and \ref{EG for deg P>kP}, we can now restate the Eisenbud-Goto conjecture combinatorially as follows: if $P$ is a non-normal very ample $d$-dimensional lattice polytope, then
	\[k_P\le \Vol(P)-|P\cap M|+d+1.\]
Suppose that $P$ has $n$ vertices, then $n\le |P\cap M|$. Thus by Corollary \ref{corollary on reg (X)}, if 
\[(m_P-d_P+1)|P\cap M|+1\le \Vol(P)+d\]
then the Eisenbud-Goto conjecture holds for the toric variety $X$ associated to $P$. Unfortunately, this is not always the case, as we will show in Example \ref{Bruns example}. Furthermore, suppose that $P$ is smooth and $\gamma=\min\{n\in \ZZ_{\ge 1}\mid C_{v,n}\supseteq P\text{ for all }v\in\mathcal{V}\}$, then Eisenbud-Goto conjecture holds for $X$ if 
\[(d_P\cdot (\gamma-1)+1)\cdot|P\cap M|\le \Vol(P)+d.\]
If the Oda's conjecture holds, then the Eisenbud-Goto conjecture would be true for all smooth polytopes as well because of Proposition \ref{EG for deg P>kP}.
\end{remark}

\section{Examples of The Regularity of Some Non-Normal Very Ample Polytopes}
In this section, we will show that the Eisenbud-Goto conjecture holds for some known examples of non-normal very ample polytopes. We first consider the following example by Gubeladze and Bruns.

\begin{example}[\cite{Gubeladze2009}]\label{Bruns example}
Consider the polytope $P$ which is the convex hull of the vertices given by the columns of the following matrix
\[M=\left(
\begin{array}{cccccccccc}
0&1&0&0&1&0&1&1\\
0&0&1&0&0&1&1&1\\
0&0&0&1&1&1&s&s+1
\end{array}
\right)
\]
with $s\ge 4$. We can verify directly that $P$ is not $(s-2)$-normal for $s\ge 4$. Indeed, let $v=(1,1,s-1)^T$. Then $v\in (s-2)P\cap M$ since
\begin{align*}
\left(
\begin{array}{c}
1\\
1\\
s-1
\end{array}
\right)&= \frac{s(s-2)-s-1}{s(s-2)}\left(
\begin{array}{c}
0\\
0\\
0
\end{array}
\right)+\frac{1}{s(s-2)}\left(
\begin{array}{c}
s-2\\
0\\
0
\end{array}
\right)\\
&+\frac{1}{s(s-2)}\left(
\begin{array}{c}
0\\
s-2\\
0
\end{array}
\right)+\frac{s-1}{s(s-2)}\left(
\begin{array}{c}
s-2\\
s-2\\
s(s-2)
\end{array}
\right),
\end{align*}
with
\[1=\frac{s(s-2)-s-1}{s(s-2)}+\frac{1}{s(s-2)}+\frac{1}{s(s-2)}+\frac{s-1}{s(s-2)}.\]
But $v\notin\underbrace{P\cap M+\cdots+P\cap M}_{s-2}$.

%since $v$ can only be a non-negative integral linear combination with $s-2$ terms of the first six columns vectors of $M$ because the third coordinates of $(1,1,s)^T$ and $(1,1,s+1)^T$ are bigger than $s-1$. This is not possible, since the third coordinate of any non-negative integral linear combination with $s-2$ terms of the first six columns vectors of $M$ is less than or equal to $s-2$. Hence, $P$ is not $(s-2)$-normal.

Now we have $\Vol(P)=s+6$, $|P\cap \ZZ^3|=8$, $\dim (P)=3$, so let $X$ be the toric variety associated to $P$, we have
\[\deg(X)=s+6\text{ and } \codim(X)=|P\cap\ZZ^3|-(\dim(P)+1)=4.\]
The Eisenbud-Goto conjecture says that $\reg(X)\le \deg(X)-\codim(X)+1=s+3$.
By \cite[Theorem 3.3]{Beck2015}, $k_P=s-1\ge 3\ge\deg(P)$. Hence, by Proposition \ref{EG for deg P>kP}, we have $\reg(X)=s$ and the Eisenbud-Goto conjecture holds for this example $s$. 

To compare this to the bound of Theorem \ref{New bound of k-normality}, we have $d_P=\nu_P=2$, $d_P+1\le m_P\le s-1$ by Lemma \ref{k_P>= m_P>= d_P}, and $k_P\le 8(s-4)+3=8s-29$. We have the following table of the known bounds of the Castelnuovo-Mumford regularity of $X$:

\begin{center}
\begin{tabular}{|c|c|c|c|}
\hline
$\reg(X)$&Theorem \ref{New bound of k-normality} & Sturmfels (\cite{Sturmfels1995}) & Eisenbud-Goto (\cite{Eisenbud1984})\\
\hline
$s$&$8s-28$& $24(s+6)$ & $s+3$\\
\hline
\end{tabular}
\end{center}

For $s=4$, we have $8s-28=4$, so the bound in Theorem \ref{New bound of k-normality} is sharp and the Eisenbud-Goto conjecture holds for this case. For $s\ge 5$, since $8s-28>s+3$, our bound does not imply the Eisenbud-Goto conjecture.
\end{example}

This example is interesting in many ways. First of all, it gives an example of non-normal very ample polytopes. In addition, because of $P$ is not $(s-2)$-normal and Proposition \ref{kP=reg(X)+1}, one cannot bound the $k$-normality and Castelnuovo-Mumford regularity of $X$ by any polynomial of $\dim X$. Furthermore, the polytope $P$ gives an example of very ample polytopes that cannot be covered by very ample simplices. To show this, we need the following lemma.
\begin{lemma}\label{inequalities on k_P}
Let $P_1,\cdots, P_n$ be very ample lattice polytopes such that $P=\bigcup_{i=1}^nP_i$ is a convex polytope. Then $P$ is very ample and
\[k_{P}\le \max\{k_{P_i}\vert i=1,\cdots,n\}.\]
\end{lemma}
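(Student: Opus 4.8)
The plan is to prove the two assertions separately, the common geometric input being that convexity of $P=\bigcup_i P_i$ controls the local structure at each vertex of $P$. I would first record a convexity fact: if $v$ is a vertex of $P$ and $v\in P_i$, then $v$ is already a vertex of $P_i$. Indeed, were $v$ not a vertex of $P_i$, it would lie in the relative interior of a positive-dimensional face of $P_i\subseteq P$, hence in the interior of a segment contained in $P$, contradicting the extremality of $v$ in $P$. Next I would establish the tangent-cone decomposition
\[\RR_{\ge 0}(P-v)=\bigcup_{i:\, v\in P_i}\RR_{\ge 0}(P_i-v).\]
The inclusion $\supseteq$ is immediate. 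For $\subseteq$, given a direction $p-v$ with $p\in P$, convexity gives $[v,p]\subseteq P$, so $v+\epsilon(p-v)\in P=\bigcup_i P_i$ for all small $\epsilon>0$; as there are finitely many $P_i$, one of them, say $P_j$, contains $v+\epsilon_k(p-v)$ for a sequence $\epsilon_k\to 0$, and closedness of $P_j$ forces $v\in P_j$, whence $p-v\in\RR_{\ge 0}(P_j-v)$.

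With these in hand, very-ampleness of $P$ is straightforward. I would fix a vertex $v$ of $P$ and take $u\in\RR_{\ge 0}(P-v)\cap M$. By the decomposition, $u\in\RR_{\ge 0}(P_j-v)$ for some $j$ with $v\in P_j$, and by the first fact $v$ is then a vertex of $P_j$. Applying very-ampleness of $P_j$ yields $u=\sum_l(w_l-v)$ with $w_l\in P_j\cap M\subseteq P\cap M$, so $u\in\NN(P\cap M-v)$. Since $v$ was an arbitrary vertex, $P$ is very ample.

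For the bound, I would set $K=\max_i k_{P_i}$ and let $k\ge K$. Given $u\in kP\cap M$, the point $u/k$ lies in $P=\bigcup_i P_i$, hence in some $P_j$, so $u\in kP_j\cap M$. Since $k\ge k_{P_j}$, the polytope $P_j$ is $k$-normal and $u$ is a sum of $k$ lattice points of $P_j\subseteq P$; thus $P$ is $k$-normal, giving $k_P\le K$.

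The main obstacle is the tangent-cone decomposition: one must argue carefully that every direction in the tangent cone of $P$ at $v$ is realized inside a piece $P_i$ that actually contains $v$, so that very-ampleness of $P_i$ can be invoked \emph{at the vertex} $v$; this is precisely where the finiteness of the cover and the closedness of each $P_i$ enter. By contrast, the $k$-normality estimate is direct, since rescaling by $1/k$ places any lattice point of $kP$ inside a single $kP_j$, so no decomposition of $u$ across several pieces is ever needed.
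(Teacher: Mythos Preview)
Your argument for the $k$-normality bound is exactly the one in the paper: rescale by $1/k$, land in some $P_j$, and use $k$-normality of $P_j$.

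For very-ampleness, however, the paper takes a much shorter route. Instead of analyzing tangent cones at vertices, the paper simply observes that a lattice polytope is very ample if and only if it is $k$-normal for all $k\gg 0$; since you have already shown that $P$ is $k$-normal for every $k\ge K=\max_i k_{P_i}$, very-ampleness of $P$ follows immediately. Your tangent-cone decomposition argument is correct (the pigeonhole plus closedness step to locate a $P_j$ containing both $v$ and points along the segment $[v,p]$ is the right idea), but it is unnecessary here: the $k$-normality bound already does all the work. Your approach has the mild advantage of being self-contained at the level of the definition of very-ampleness, while the paper's approach is cleaner and makes the logical dependence transparent---very-ampleness is a \emph{consequence} of the inequality on $k_P$, not something that needs a separate local analysis.
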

\begin{proof}
Suppose that $x\in kP$ for some $k\ge \max\{k_{P_i}\vert i=1,\cdots,n\}$. Then $x\in kP_i$ for some $i=1,\cdots, n$. Hence, $x$ can be expressed as a sum of $k$ lattice points in $P_i\subseteq P$. Therefore, $P$ is $k$-normal for all $k\ge \max\{k_{P_i}\vert i=1,\cdots,n\}$; i.e., $k_{P}\le \max\{k_{P_i}\vert i=1,\cdots,n\}$ and $P$ is very ample since we know that a polytope is very ample if and only if it is $k$-normal for some $k$ big enough. 
\end{proof}

\begin{proposition}\label{3d non-normal cannot be covered by very ample simplices}	
Any $3$-dimensional very ample non-normal lattice polytope $P$ cannot be covered by very ample lattice $3$-simplices. %In particular, the 
%Let $P$ be the convex hull of the vertices given by the columns of the following matrix
%\[M=\left(
%\begin{array}{cccccccccc}
%0&1&0&0&1&0&1&1\\
%0&0&1&0&0&1&1&1\\
%0&0&0&1&1&1&s&s+1
%\end{array}
%\right)
%\]
%with $s\ge 4$. Then $P$ cannot be covered by very ample lattice $3$-simplices. 
\end{proposition}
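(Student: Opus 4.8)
The plan is to argue by contradiction and to reduce the whole statement to one claim: \emph{every very ample lattice $3$-simplex is normal}. Suppose, to the contrary, that $P$ can be covered by very ample lattice $3$-simplices $P_1,\dots,P_r$, so that $P=\bigcup_{i=1}^{r}P_i$ is convex. By Lemma \ref{inequalities on k_P}, $P$ is then very ample and $k_P\le\max\{k_{P_i}\mid i=1,\dots,r\}$. If the claim holds, each $P_i$ is normal, i.e. $k_{P_i}=1$, whence $k_P=1$ and $P$ is normal, contradicting the hypothesis that $P$ is non-normal. Thus it suffices to establish the claim.

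To prove the claim, let $\Delta$ be a very ample lattice $3$-simplex with vertices $v_0,\dots,v_3$. Since $\dim\Delta=3$, Lemma \ref{P+(n-1)P=nP}(b) gives $d_\Delta\le 2$, so by Lemma \ref{d_P <= m_P <= k_P} (\ref{2.4.c}) it is enough to prove that $\Delta$ is $2$-normal: $2$-normality then propagates to every $k\ge 2$, while $1$-normality is automatic, giving $k_\Delta=1$. So I would fix $x\in 2\Delta\cap M$ and try to write $x=a+b$ with $a,b\in\Delta\cap M$, using the unique barycentric expression $x=\sum_{i=0}^{3}c_iv_i$ with $c_i\ge 0$ and $\sum_i c_i=2$.

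The argument splits into three cases. If some $c_i\ge 1$, then $x-v_i$ has nonnegative barycentric coordinates summing to $1$, so $x-v_i\in\Delta\cap M$ and $x=v_i+(x-v_i)$. If some $c_j=0$, then $x\in 2F_j$ for the facet $F_j=\conv(v_k\mid k\ne j)$, which is a lattice triangle; as every lattice polygon is normal, $x$ is a sum of two lattice points of $F_j\subseteq\Delta$. The remaining case, all $c_i\in(0,1)$, is the essential one, and here very-ampleness must be used. Passing to the vertex cone $\tau_0=\Cone(v_1-v_0,v_2-v_0,v_3-v_0)$ and letting $\phi_0$ be the linear functional with $\phi_0(v_i-v_0)=1$, one has the identity $(\Delta\cap M)-v_0=\{\xi\in\tau_0\cap M\mid \phi_0(\xi)\le 1\}$. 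Writing $\xi:=x-2v_0\in\tau_0\cap M$, a short computation gives $\phi_0(\xi)=2-c_0\in(1,2)$, and finding $a,b$ is exactly equivalent to splitting $\xi=\xi_1+\xi_2$ with $\xi_1,\xi_2\in\tau_0\cap M$ and $\phi_0(\xi_1),\phi_0(\xi_2)\le 1$. Very-ampleness of $\Delta$ says precisely that $\tau_0\cap M$ is generated by $(\Delta\cap M)-v_0$, i.e. that every Hilbert basis element of this (saturated, hence normal) simplicial cone has $\phi_0$-degree $\le 1$.

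The main obstacle is this last splitting. Decomposing $\xi$ into Hilbert basis elements yields summands of $\phi_0$-degree in $(0,1]$ whose degrees total $2-c_0<2$; the difficulty is to regroup them into two bundles each of degree $\le 1$ (equivalently, to exhibit a lattice point in the centrally symmetric region $(\Delta-v_0)\cap(\xi-(\Delta-v_0))$). A purely numerical bin-packing of the degrees need not succeed, so I expect to use the three-dimensionality of $\tau_0$ together with the normality of its three facet subcones, each a saturated $2$-dimensional cone where degree-$\le 1$ generation holds by the planar case, to produce the required $\xi_1$. Completing this interior case is where the real work lies; the two boundary cases above and the reductions leading to it are routine.
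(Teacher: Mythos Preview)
Your reduction is correct and coincides with the paper's: assume a covering $P=\bigcup_i P_i$ by very ample $3$-simplices, apply Lemma \ref{inequalities on k_P} to get $k_P\le\max_i k_{P_i}$, and conclude $k_P=1$ once each $P_i$ is known to be normal, contradicting the hypothesis. The paper, however, does \emph{not} attempt to prove the key claim that every very ample lattice $3$-simplex is normal; it simply cites \cite[Proposition~2.2]{Ogata2005} for this fact and is done.

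Your attempt to establish that claim directly has a genuine gap, and you acknowledge it: the interior case (all barycentric coordinates $c_i\in(0,1)$) is left open. This is not a technicality but the entire substance of the claim. The two boundary cases are indeed routine---some $c_i\ge 1$ is the pigeonhole step of Lemma \ref{P+(n-1)P=nP}(a), and some $c_j=0$ reduces to a lattice triangle---but the interior case is precisely where Ogata's argument does nontrivial work. Your own diagnosis that a naive bin-packing of Hilbert-basis degrees need not succeed is correct, and the suggested rescue via ``three-dimensionality of $\tau_0$ together with the normality of its three facet subcones'' is not a proof: an interior point $\xi$ lies on no facet subcone, and nothing you have written produces the required $\xi_1\in\tau_0\cap M$ with $\phi_0(\xi_1)\le 1$ and $\phi_0(\xi-\xi_1)\le 1$. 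As written, the proposal stops exactly where the difficulty begins. Either cite Ogata's result as the paper does, or supply a complete argument for the interior case.
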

\begin{proof}
Suppose that  $P$ can be covered by very ample $3$-simplices $P=\cup_{i=1}^nP_i$. Then we have each $P_i$ is normal and $k_{P_i}=1$ by \cite[Proposition 2.2]{Ogata2005}. Hence, $k_P= 1$ by Lemma \ref{inequalities on k_P}. This contradicts the result of \cite[Theorem 3.3]{Beck2015} that $k_P=s-1\ge 3$. Therefore, $P$ cannot be covered by very ample simplices.
\end{proof}
From Proposition \ref{3d non-normal cannot be covered by very ample simplices} it follows that the polytope $P$ defined in Example \ref{Bruns example} cannot be covered by very ample lattice $3$-simplices.

\begin{example}
For $d\ge 3$ and $h\ge 1$, Higashitani contructed a class of $d$-dimensional very ample lattice polytopes $\mathcal{P}_{d,h}$ with exactly $h$ holes, the lattice points in $kP$ that cannot be expressed at a sum of $k$ lattice points in $P$ $k$ runs from $2$ to $k_P-1$ (\cite[Theorem 1.]{Higashitani2014}), as follows. Let
\[u_i=\begin{cases}
0,&i=1\\
e_d,&i=2\\
e_2+\cdots+e_{d-1}, &i=3\\
h(e_2+\cdots+e_{d-1}+e_d),&i=4\\
(h-1)(e_2+\cdots+e_{d-1})+he_d, &i=5\\
h(e_2+\cdots+e_{d-1})+(h-1)e_d,&i=6\\
e_1+4e_d,&i=7\\
e_1+5e_d, &i=8\\
e_1+e_2+\cdots+e_{d-1}, &i=9\\
e_1+e_2+\cdots+e_{d-1}+e_d, &i=10,
\end{cases}\]
and
\begin{align*}
&v_i=e_i, &&i=2,\cdots,d-1\\
&v_i'=e_i+e_d, &&i=2,\cdots,d-1,
\end{align*}
where $e_1,\cdots, e_d$ are the unit coordinate vectors of $\RR^d$. Then define $\mathcal{P}_{d,h}$ to be the convex hull of
\[\{u_1,\cdots,u_{10}\}\cup\{v_i,v_i'\vert i=2,\cdots,d-1\}.\]
We have $\mathcal{P}_{d,h}$ is very ample and $k_{\mathcal{P}_{h,d}}=3$ (\cite[Theorem 1]{Higashitani2014}). Furthermore, $\mathcal{P}_{h,d}$ has $h$ holes and its facets are all normal (\cite[Lemma 5 \& 6]{Higashitani2014}), so the holes are interior lattice points of $\mathcal{P}_{h,d}$. Thus $\deg(\mathcal{P}_{h,d})=\dim(\mathcal{P}_{h,d})=d\ge 3=k_{\mathcal{P}_{h,d}}$.

By Proposition \ref{EG for deg P>kP}, we have that $\reg(X)= \deg(\mathcal{P}_{h,d})+1$, where $X\subseteq\PP^r$ is the toric variety obtained from $\mathcal{P}_{d,h}$. The Eisenbud-Goto conjecture holds for $X$ because of \cite[Proposition 2.2]{Hofscheier2017}.

We have in this case $m_{\mathcal{P}_{h,d}}=k_{\mathcal{P}_{h,d}}=3$, $d_{\mathcal{P}_{h,d}}=2$, so Theorem \ref{New bound of k-normality} yields
\[k_{\mathcal{P}_{h,d}}\le (m_P-d_P-1)n+\nu_P+1=\nu_P+1\le n.\]
This is much stronger than the Sturmfels' bound (\cite{Sturmfels1995}): $k_P\le |P\cap M|\cdot \Vol(P)\cdot(|P\cap M|-d-1)-1$. 
%It is obvious that any other known bound of regularity is off by big margin in this case, as $|P\cap M|\ge 2d+6$ and hence $\codim(X)\ge d+5$ already.
\end{example}
\section{On $d_P$ and Normal Polytopes}
We will give a short survey on $d_P$ in this section. We first begin with some upper bounds of $d_P$: 
%Due to Lemma \ref{P+(n-1)P=nP}, in order to verify if a lattice polytope $P$ is normal or not, we only have to verify its $k$-normality for $2\le k\le \dim P-1$. We can further reduce the steps as follows, which is obvious from the definition of $d_P$. 
%
%\begin{proposition}\label{deg P+phi P<dim P then P normal}
%	Let $P$ be a lattice polytope. Suppose that $P$ is $k$-normal for all $k\le d_P$. Then $P$ is normal.
%\end{proposition}

%\begin{proof}
%	For any $k\ge d_P$, by the hypothesis that $P$ is $k$-normal for all $k\le d_P$,  we have
%	\[\underbrace{P\cap M+\cdots+P\cap M}_{k}\twoheadrightarrow (d_P\cdot P)\cap M+\underbrace{P\cap M+\cdots+P\cap M}_{k-d_P}.\]
%Now because of the definition of $d_P$, we have a surjection
%\[(d_P\cdot P)\cap M+\underbrace{P\cap M+\cdots+P\cap M}_{k-d_P}\twoheadrightarrow kP\cap M.\]
%Thus,
%\[\underbrace{P\cap M+\cdots+P\cap M}_{k}\twoheadrightarrow kP\cap M.\]	
%Hence $P$ is $k$-normal for every $k\ge d_P$ as well. Therefore, $P$ is a normal polytope.
%\end{proof}
\begin{proposition}\label{d_P <=deg P}
Let $P$ be a lattice polytope. Then 
\begin{enumerate}
\item \cite[Proposition IV.10]{Hering2006} If $P$ is not a standard simplex, then
\[d_P\le \deg P.\]
\item \cite[Proposition 2.2]{Hofscheier2017} If $P$ is spanning, in particular if $P$ is very ample, then
\[d_P\le \Vol(P)+d+1-|P\cap M|.\]
\end{enumerate}
\end{proposition}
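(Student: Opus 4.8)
The plan is to recast both bounds in terms of the affine semigroup ring of the cone over $P$ and to read off $d_P$ as a module-generation degree, which is then controlled by Castelnuovo--Mumford regularity. Concretely, let $C=\Cone(P\times\{1\})\subseteq M_{\RR}\times\RR$, let $\Lambda=M\times\ZZ$, and let $R=\CC[C\cap\Lambda]=\bigoplus_{k\ge 0}R_k$ be the associated semigroup ring, graded by height, so that $R_k$ has a $\CC$-basis indexed by $kP\cap M$. Write $A=\CC[R_1]\subseteq R$ for the subalgebra generated in degree one. The surjectivity of $P\cap M+kP\cap M\to (k+1)P\cap M$ is exactly the equality $R_1\cdot R_k=R_{k+1}$, and by the reformulation in the Remark following Lemma~\ref{d_P <= m_P <= k_P}, $d_P$ is the least positive integer $n$ with $R_1\cdot R_k=R_{k+1}$ for all $k\ge n$; that is, $d_P$ is (up to the positivity convention) the top degree in which $R$ needs generators as a graded module over $A$.

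For part (1) I would bound this generation degree by regularity. Since $C\cap\Lambda$ is a saturated affine semigroup, Hochster's theorem gives that $R$ is Cohen--Macaulay of dimension $d+1$; moreover $R$ is a finite module over $A$ by finiteness of normalization, hence a finitely generated graded module over a polynomial presentation $T=\CC[x_w:w\in P\cap M]\twoheadrightarrow A$. Because $T$ is standard graded, $R$ is generated in degrees $\le \reg_T(R)$, and since $T_1=R_1$ generates $T$ this yields $R_{k+1}=R_1R_k$ for all $k\ge \reg_T(R)$, so $d_P\le \reg_T(R)$. It remains to identify $\reg_T(R)$. By Cohen--Macaulayness only the top local cohomology $\h^{d+1}_{\idealm}(R)$ is nonzero, so $\reg_T(R)=a_{d+1}(R)+(d+1)$, where $a_{d+1}(R)$ is the top nonvanishing degree of $\h^{d+1}_{\idealm}(R)$. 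By the Danilov--Stanley description the graded canonical module $\omega_R$ is spanned by the relative interior lattice points of $C$, i.e. $(\omega_R)_k$ has basis $\operatorname{int}(kP)\cap M$, so its least nonzero degree is the codegree $d+1-\deg P$. Graded local duality then gives $a_{d+1}(R)=-(d+1-\deg P)$, whence $\reg_T(R)=\deg P$ and $d_P\le\deg P$. The hypothesis that $P$ is not a standard simplex is precisely what rules out the degenerate case $\deg P=0$ (equivalently $\Vol(P)=1$), where $R=A$ is a polynomial ring of regularity $0$ but $d_P=1$ by the positivity convention.

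For part (2) I would combine part (1) with the degree bound and treat the simplex case separately. If $P$ is a standard simplex then $\Vol(P)=1$ and $|P\cap M|=d+1$, so the right-hand side equals $1=d_P$ and the inequality holds with equality. If $P$ is spanning but not a standard simplex, part (1) gives $d_P\le\deg P$, and it suffices to show $\deg P\le \Vol(P)+d+1-|P\cap M|$. Passing to the $h^*$-vector, $\Vol(P)=\sum_i h^*_i$ and $|P\cap M|=d+1+h^*_1$, so using $h^*_0=1$ the claimed bound reads $\deg P\le 1+\sum_{i\ge 2}h^*_i$. This follows from the positivity statement for spanning polytopes, namely $h^*_i\ge 1$ for all $0\le i\le \deg P$: summing the $\deg P-1$ inequalities $h^*_2,\dots,h^*_{\deg P}\ge 1$ gives $\sum_{i\ge 2}h^*_i\ge \deg P-1$, i.e. the bound. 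This is exactly the content of the cited Proposition~2.2 of \cite{Hofscheier2017}.

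The main obstacle, and the least elementary step above, is the cohomological input for part (1): identifying $\reg_T(R)$ with $\deg P$. This rests on three nontrivial facts---Hochster's Cohen--Macaulayness of normal semigroup rings, the Danilov--Stanley identification of $\omega_R$ with the interior-point module, and graded local duality---together with the care needed around the standard-simplex boundary case and the positivity convention in the definition of $d_P$. For part (2) the essential (and also nonelementary) ingredient is the positivity $h^*_i\ge 1$ for spanning polytopes; everything else is the bookkeeping translating $\Vol(P)$ and $|P\cap M|$ into $h^*$-coordinates.
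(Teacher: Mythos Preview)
The paper does not supply its own proof of this proposition; both parts are stated with citations to \cite{Hering2006} and \cite{Hofscheier2017} and left at that. Your proposal is correct and in fact fills in what the paper omits.

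For part (1), the regularity argument you outline---realizing $d_P$ as the top generating degree of the normal semigroup ring $R$ over its degree-one subalgebra, then computing $\reg_T(R)=\deg P$ via Hochster's Cohen--Macaulayness, the Danilov--Stanley description of $\omega_R$, and graded local duality---is essentially the argument behind Hering's Proposition~IV.10, so you are recovering the cited proof rather than giving a genuinely different one. The one point worth making explicit is that $\sqrt{\idealm_T R}=R_+$ (needed to identify local cohomology over $T$ with that over $R_+$); this follows from $R_1R_k=R_{k+1}$ for $k\gg 0$, which you already have from Lemma~\ref{P+(n-1)P=nP}(b).

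For part (2), your reduction is exactly how the paper itself uses these results elsewhere (cf.\ Remark~\ref{Combinatorial EG} and Corollary~\ref{EG for deg P>kP}): combine part (1) with the inequality $\deg P\le \Vol(P)-|P\cap M|+d+1$, which is precisely \cite[Proposition~2.2]{Hofscheier2017}, and handle the standard simplex separately. Your derivation of that inequality from the $h^*$-positivity $h^*_i\ge 1$ for $0\le i\le \deg P$ (the main theorem of \cite{Hofscheier2017}) is correct and is indeed how Proposition~2.2 is proved there; the bookkeeping $\Vol(P)=\sum_i h^*_i$, $|P\cap M|=d+1+h^*_1$ is standard.
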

\begin{remark}
Unfortunately, unlike Lemma \ref{d_P <= m_P <= k_P}, $P$ is normal does not implies that $d_P=\deg(P)$, with the standard simplex is a counterexample. In this case, we have $d_P=1$ while $\deg(P)=0$. In addition, it is not the case that $d_P=\deg(P)$ implies $P$ is normal, with a counterexample given in Example \ref{Bruns example}, where $d_P=\deg(P)=2$.
\end{remark}

Since $d_P=1$ if and only if $P$ is normal, we obtain a simple combinatorial proof for part of \cite[Proposition 6.9]{Blekherman2015}.

\begin{corollary}[{\cite[Proposition 6.9]{Blekherman2015}}]\label{deg=1 implies normality}
	Any lattice polytope of degree $0$ or $1$ is normal.
\end{corollary}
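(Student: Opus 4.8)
The plan is to reduce the statement to the criterion, recorded in the sentence immediately preceding the corollary, that $d_P = 1$ if and only if $P$ is normal (this comes from Lemma \ref{d_P <= m_P <= k_P}(\ref{2.4.e}) together with the observation that a normal polytope has $k_P = d_P = 1$). So it suffices to prove that every lattice polytope $P$ with $\deg P \in \{0,1\}$ satisfies $d_P = 1$. The main engine will be Proposition \ref{d_P <=deg P}(1), which gives $d_P \le \deg P$ whenever $P$ is \emph{not} a standard simplex.

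First I would dispose of the excluded case: if $P$ is a standard simplex, then $P$ is normal (standard simplices are normal, as every lattice point of $kP$ is visibly a sum of $k$ vertices), so $d_P = 1$ and there is nothing more to check. This handles the hypothesis of Proposition \ref{d_P <=deg P}(1) cleanly, since that proposition only applies to non-standard-simplices.

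For the remaining case, suppose $P$ is not a standard simplex. Then Proposition \ref{d_P <=deg P}(1) yields $d_P \le \deg P$. Since we are assuming $\deg P \le 1$ and since $d_P \ge 1$ always (being a positive integer by definition), we conclude $d_P = 1$, hence $P$ is normal. In particular, when $\deg P = 0$ the inequality would read $d_P \le 0$, which is impossible; this simply confirms that a degree-$0$ polytope that is not a standard simplex cannot exist, so that every degree-$0$ polytope falls into the first case and is normal.

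I do not expect a genuine obstacle here; the only subtlety is bookkeeping around the ``standard simplex'' exclusion in Proposition \ref{d_P <=deg P}(1), which is precisely why the argument must branch into the two cases above rather than applying the bound uniformly. Everything else is an immediate combination of the stated results with the integrality bound $d_P \ge 1$.
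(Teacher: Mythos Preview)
Your proof is correct and is essentially the same as the paper's own argument: both handle the degenerate case (standard simplex / degree $0$) directly and then invoke Proposition~\ref{d_P <=deg P}(1) together with $d_P\ge 1$ to force $d_P=1$. The only cosmetic difference is that the paper branches on $\deg P=0$ versus $\deg P=1$, while you branch on ``standard simplex'' versus ``not''; since degree-$0$ polytopes are exactly the basic simplices, the two splits coincide.
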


\begin{proof}
If $\deg P=0$, then $P$ is a basic simplex, so it is normal. Now suppose that $\deg P=1$. By Proposition \ref{d_P <=deg P}, $d_P\le \deg P\le 1$, which implies that $P$ is normal.
\end{proof}

By Proposition \ref{kP=reg(X)+1}, the Eisenbud-Goto conjecture would imply the following inequalities, that we summarize in the follwing question.
\begin{question}\label{Does EG holds for toric varieties?}
Let $P$ be a lattice polytope, is it the case that $d_P\le \Vol(P)+d+1-|P\cap M|$ even if $P$ is not a spanning polytope? If it is not then is it true that $d_P\le \Vol(P)$ for all lattice polytope $P$? Also, if $P$ is very ample lattice polytope, do we always have $k_P\le \Vol P$? 
\end{question}
It is obviously true that $d_P\le \Vol(P)$ if $\Vol(P)\ge d-1$. The interesting case is when $\Vol(P)\le d-2$; i.e, $P$ is a ``small" polytope.
\begin{example}
Let $P$ be a lattice polytope with $\dim P\le 3$. Then 
\[d_P\le \Vol P.\]
Also, for any $d$-dimensional lattice polytope $P$, if $d_P\le 2$, then $d_P\le \Vol(P)$.
\end{example}
\begin{proof}
In dimension $2$, since $d_P=1$ always, we have $d_P\le \Vol(P)$. If $\dim P=3$, then either $d_P=1$, so $d_P\le \Vol(P)$ trivially, or $d_P=2$. If $d_P=2$, then $\Vol(P)\ge d_P$. This is because if $\Vol(P)=1$ then $P$ is a standard simplex and so $P$ would be normal and $d_P=1$, a contradiction. Hence, $d_P\le \Vol(P)$ for the case $\dim P=3$ as well.
\end{proof}

In general, the difference $k_P-d_P$ cannot be bounded by any polynomial of $\dim(P)$, with an example again given by Example \ref{Bruns example}.
\begin{proposition}
For any non-negative integer $n$, there exists a $3$-dimensional very ample lattice polytope $P$ such that $k_P-d_P=n$.
\end{proposition}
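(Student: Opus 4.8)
The plan is to exhibit an explicit one-parameter family realizing every value of the difference, namely the Bruns--Gubeladze polytopes of Example~\ref{Bruns example}, and to handle the boundary value $n=0$ separately. First I would dispose of the case $n=0$. By Lemma~\ref{d_P <= m_P <= k_P}~(\ref{2.4.e}), the equality $k_P=d_P$ holds precisely when $P$ is normal, so any normal $3$-dimensional very ample lattice polytope works; for instance the standard $3$-simplex (or the unit $3$-cube of Example~\ref{hyper cube example}), for which $k_P=d_P=1$ and hence $k_P-d_P=0$.

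For $n\ge 1$, I would take $P$ to be the polytope of Example~\ref{Bruns example} with parameter $s=n+3\ge 4$. That example already supplies, for every such $s$, exactly the two invariants needed: the polytope is $3$-dimensional, very ample, and non-normal, with $d_P=2$ independent of $s$, while by \cite[Theorem 3.3]{Beck2015} one has $k_P=s-1$. Substituting $s=n+3$ gives $k_P=n+2$, so that
\[
k_P-d_P=(n+2)-2=n,
\]
as required. Letting $s$ range over $\{4,5,6,\dots\}$ thus sweeps the difference $s-3$ over all of $\ZZ_{\ge 1}$, and together with the normal case this covers every non-negative integer.

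There is no genuine obstacle beyond invoking the two computed invariants of the Bruns family; all the combinatorial content is already contained in Example~\ref{Bruns example}, where $d_P=2$ and $k_P=s-1$ are recorded. The only point that genuinely requires the cited input is the exact value $k_P=s-1$: one needs that $P$ fails to be $(s-2)$-normal yet is $(s-1)$-normal, and the non-normality for each individual $s$ (rather than merely asymptotically) is what guarantees that $k_P-d_P$ attains each positive integer on the nose. The very-ampleness and dimension $3$ are immediate from the defining matrix, so once the value $k_P=s-1$ is granted from \cite{Beck2015} and $d_P=2$ from Example~\ref{Bruns example}, the proof is a one-line substitution.
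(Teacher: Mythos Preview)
Your proof is correct and follows essentially the same approach as the paper: handle $n=0$ with any normal $3$-polytope, and for $n\ge 1$ use the Bruns--Gubeladze polytope of Example~\ref{Bruns example} with $s=n+3$, invoking $d_P=2$ and $k_P=s-1$ from \cite[Theorem~3.3]{Beck2015}. Your write-up is somewhat more detailed (e.g., explicitly citing Lemma~\ref{d_P <= m_P <= k_P}(\ref{2.4.e}) and naming a specific normal polytope), but the argument is the same.
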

\begin{proof}
For $n=0$, any normal polytope $P$ would give the desired result. For $n\ge 1$, take $P$ to be the polytope in Example \ref{Bruns example} with $s=n+3$. Then we have $d_P=2$ and by \cite[Theorem 3.3]{Beck2015}, $k_P=n+2$. The conclusion follows.
\end{proof}
However, for very ample lattice simplices we have the following result.
\begin{proposition}[{\cite[Theorem 1.3.3 (a)]{Bruns1997}, \cite[Proposition 2.4]{Ogata2005}}]
Let $P$ be a very ample lattice simplex. Then 
\[k_P-d_P\le \dim P-1.\]
\end{proposition}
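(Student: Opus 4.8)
$$k_P - d_P \le \dim P - 1 \text{ for a very ample lattice simplex } P.$$

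The plan is to exploit the simplicial structure of $P$ together with Theorem \ref{New bound of k-normality} applied to the special case $n = d+1$, where $d = \dim P$ and $n$ is the number of vertices. The bound of Theorem \ref{New bound of k-normality} reads $k_P \le (m_P - d_P)\cdot(d+1) + 1$, which is far weaker than what we want; so the real work is to get a sharp control of $\sigma(x, d_P v)$ directly from the simplex geometry, bypassing the pigeonhole-style multiplication by $n$. The natural route is to mimic the volume/Cramer's-rule computation of Proposition \ref{Bound of m2 in term of volume} and of Proposition \ref{bounds of m1, m2 for smooth polytopes}, but adapted to a simplex where we have explicit barycentric coordinates.

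First I would fix a vertex $v$ of the simplex $P$ and use that the $d$ edges emanating from $v$ give a basis of directions: every point of $kP$ has unique barycentric coordinates with respect to the $d+1$ vertices. For $x \in d_P P \cap M$ and a vertex $v$, the very-ampleness assumption lets us write $x - d_P v = \sum_{i} a_i (w_i - v)$ with $w_i \in P \cap M$, and $\sigma(x, d_P v)$ is the minimal total $\sum a_i$. The key step is to bound this minimal representation length by $d_P + (d-1)$, which upon maximizing over $x$ and $v$ gives $m_P \le d_P + (d-1)$, i.e. $m_P - d_P \le d - 1$. Combined with the equivalence $k_P = d_P$ when $P$ is normal and the structural bounds of Lemma \ref{d_P <= m_P <= k_P}, I expect the simplex case to collapse $k_P - d_P$ to exactly $m_P - d_P$, yielding the claim. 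The cited references \cite{Bruns1997} and \cite{Ogata2005} suggest the sharper input is a result on the quotient monoid: for a simplex, the ``gaps'' in the semigroup $S_{P,v}$ occur only up to height $d-1$, so any lattice point at height $\ge d_P$ that needs correcting can be corrected using at most $d-1$ extra generators beyond the base $d_P$.

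The main obstacle will be establishing precisely that the minimal representation length does not exceed $d_P + (d-1)$. The difficulty is that the naive volume bound (as in Proposition \ref{Bound of m2 in term of volume}) grows like $d\cdot d_P^d \Vol(P)$, which is much too large; to get the linear-in-$d$ bound one must use the fundamental parallelepiped structure of the simplicial monoid. Concretely, I would decompose $\mathbb{R}_{\ge 0}(P - v) \cap M$ using the Hilbert basis of the simplicial cone: the cone spanned by the primitive edge generators has a fundamental domain whose lattice points sit at bounded height, and very-ampleness forces each such point to be a sum of at most $d-1$ elements of $P \cap M - v$ on top of a lattice translate. Making this counting argument rigorous, and checking that it interacts correctly with the scaling by $d_P$ so that the total overshoot is additive rather than multiplicative, is the crux; I would lean on the explicit simplex results of \cite{Bruns1997} for this final quantitative step rather than reproving them.
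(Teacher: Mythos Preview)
Your proposal has a genuine gap. You correctly observe that Theorem~\ref{New bound of k-normality} applied to a simplex ($n=d+1$) gives only $k_P \le (m_P-d_P)(d+1)+1$, which is quadratic in $d$ even if you succeed in proving $m_P-d_P\le d-1$. Your workaround is the assertion that ``the simplex case collapses $k_P-d_P$ to exactly $m_P-d_P$,'' i.e.\ that $k_P=m_P$ for very ample simplices. Nothing in the paper supports this: Lemma~\ref{d_P <= m_P <= k_P} gives only $d_P\le m_P\le k_P$, there is no reverse inequality, and you give no argument why the simplex structure should force equality. Without that step, a bound on $m_P$ alone cannot yield the linear bound on $k_P$ you are after; the factor $n$ in Theorem~\ref{New bound of k-normality} comes from a pigeonhole step that your outline never bypasses.

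The paper's own proof does not attempt to derive the result from its machinery at all. It simply cites \cite[Theorem~1.3.3(a)]{Bruns1997} for $\dim P=2$ (every lattice polygon is normal, so $k_P=d_P=1$) and defers the case $\dim P\ge 3$ entirely to the proof of \cite[Proposition~2.4]{Ogata2005}. Ogata's argument does establish $k$-normality for $k\ge d_P+d-1$ directly, but the mechanism is not ``bound $m_P$ and invoke Theorem~\ref{New bound of k-normality}''; it is a reduction specific to the simplicial cone that avoids the pigeonhole multiplication by $n$. If you want a self-contained argument along your lines, you would have to replace that pigeonhole step by something that exploits barycentric coordinates to guarantee the needed vertex multiplicity without the factor $d+1$ --- which is essentially what Ogata's proof does, and what your proposal leaves as an unfulfilled ``I expect.''
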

\begin{proof}
If $\dim P=2$ then $P$ is normal by \cite[Theorem 1.3.3 (a)]{Bruns1997}, so $0=k_P-d_P\le \dim P-1=1$. The case $\dim P\ge 3$ follows from the proof of \cite[Proposition 2.4]{Ogata2005}.
\end{proof}
The ultimate goal in bounding $k_P-d_P$ is to prove or disprove that for smooth lattice polytope we have $k_P-d_P=0$. This is another interpretation of Oda's question \cite{Oda1988}.

Another question is what dilations of $P$ are normal. It is well-known that if $P$ is a $d$-dimensional lattice polytope then $(d-1)P$ is normal (\cite{Liu1993}, \cite{Ewald1991}, \cite{Bruns1997}). The following lemma, which follows easily from the definition of $d_P$, gives a slightly improved result, which also implies \cite[Proposition IV.10]{Hering2006} because of Proposition \ref{d_P <=deg P}. 

\begin{proposition}\label{d_PP is normal}
Let $P$ be a lattice polytope. Then $mP$ is normal for every $m\ge d_P$.
\end{proposition}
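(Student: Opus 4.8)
The plan is to unwind the definition of $d_P$ and show that the relevant additivity property for $mP$ is inherited from the defining property of $P$ at level $d_P$. Recall that $d_P$ is the smallest positive integer such that the map $P\cap M + kP\cap M \to (k+1)P\cap M$ is surjective for all $k\ge d_P$; equivalently, by the remark following Lemma \ref{d_P <= m_P <= k_P}, the property $\mathcal{LD}_P(d_P)$ holds. I want to prove that $Q:=mP$ is normal for every $m\ge d_P$, i.e. that $k_Q=1$, which by Lemma \ref{d_P <= m_P <= k_P}(\ref{2.4.e}) is equivalent to $d_Q=1$.

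First I would fix $m\ge d_P$ and observe the basic scaling identity $k\cdot(mP) = (km)P$, so that $k(mP)\cap M = (km)P\cap M$ for every $k\ge 1$. The goal is to show that any lattice point $u\in k(mP)\cap M = (km)P\cap M$ can be written as a sum of $k$ lattice points of $mP$. Since $km\ge m\ge d_P$, I can apply property $\mathcal{LD}_P(d_P)$ (Lemma \ref{d_P <= m_P <= k_P}(\ref{2.4.a})) at level $km$ to decompose
\[
u = x + \sum_{i=1}^{km-d_P} u_i,
\]
where $x\in d_P P\cap M$ and each $u_i\in P\cap M$.

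The key step is then to regroup this decomposition into exactly $k$ summands, each lying in $mP\cap M$. The summand $x$ lies in $d_PP\cap M\subseteq mP\cap M$ since $d_P\le m$ (and $mP\supseteq d_PP$ as both contain the origin translate appropriately; more carefully, $x\in d_PP$ implies $x\in mP$ because $d_PP\subseteq mP$ for $m\ge d_P$). There remain $km-d_P$ lattice points $u_i$ of $P$, and I want to bundle $x$ together with enough of the $u_i$ to form the first block, and partition the rest into $k-1$ blocks of $m$ points each. Counting: after using $x$ (which already ``costs'' $d_P$ worth of dilation), I need the first block to total dilation $m$, so it absorbs $x$ plus $m-d_P$ of the $u_i$, giving a point in $d_PP + (m-d_P)P = mP$; the remaining $km-d_P-(m-d_P) = (k-1)m$ points $u_i$ split evenly into $k-1$ groups of $m$, each summing to a point of $mP\cap M$. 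This exhibits $u$ as a sum of $k$ lattice points of $mP$, proving $mP$ is normal.

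The main obstacle, such as it is, is purely bookkeeping: verifying that the counts work out so that each block is a sum of exactly $m$ lattice points of $P$ (hence lands in $mP\cap M$), and confirming the containment $d_PP\subseteq mP$ used for the first block. Both are elementary, so I expect no genuine difficulty; the content is entirely in the invocation of $\mathcal{LD}_P(d_P)$. Finally, since $mP$ is $1$-normal, Lemma \ref{d_P <= m_P <= k_P}(\ref{2.4.e}) gives that $mP$ is normal, completing the proof.
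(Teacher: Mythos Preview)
Your argument is correct and is exactly the elaboration the paper has in mind: the proposition is stated there without proof, with the remark that it ``follows easily from the definition of $d_P$.'' The regrouping of the decomposition $u = x + \sum_{i=1}^{km-d_P} u_i$ into one block $x + u_1 + \cdots + u_{m-d_P} \in d_PP + (m-d_P)P = mP$ and $k-1$ further blocks of $m$ points each is the intended computation.

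One minor slip worth cleaning up: the aside ``$d_PP\cap M\subseteq mP\cap M$ since $d_P\le m$'' is false in general (take $P=[1,2]\subset\RR$, so $P\not\subseteq 3P=[3,6]$); dilation is not monotone unless $0\in P$. Fortunately you never actually use this containment---what you use, correctly, is the Minkowski identity $d_PP+(m-d_P)P=mP$ for the first block. Also, the closing appeal to Lemma~\ref{d_P <= m_P <= k_P}(\ref{2.4.e}) is superfluous: you have already shown directly that $mP$ is $k$-normal for every $k\ge 1$, which is the definition of normality.
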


As a corollary of Proposition \ref{d_P <=deg P} and Proposition \ref{d_PP is normal}, we have:
\begin{corollary}
For any very ample lattice polytope $P$, $\Vol(P)\cdot P$ is normal.
\end{corollary}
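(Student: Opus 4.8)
The plan is to reduce the statement to the single numerical inequality $d_P \le \Vol(P)$, after which Proposition \ref{d_PP is normal} closes the argument at once. That proposition asserts that $mP$ is normal for every $m \ge d_P$, so once we know $\Vol(P) \ge d_P$ we may simply take $m = \Vol(P)$ and conclude that $\Vol(P)\cdot P$ is normal. (Here it is worth recording that $\Vol(P)$ is a positive integer for any $d$-dimensional lattice polytope, so $\Vol(P)\cdot P$ is a genuine lattice dilation and the statement is not vacuous.)

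To establish $d_P \le \Vol(P)$ I would invoke part (2) of Proposition \ref{d_P <=deg P}, the Hofscheier bound. Since $P$ is very ample it is in particular spanning, so that proposition gives
\[
d_P \le \Vol(P) + d + 1 - |P \cap M|,
\]
where $d = \dim P$. The only remaining input is a lower bound on the number of lattice points: a $d$-dimensional polytope has at least $d+1$ vertices, and every vertex of a lattice polytope lies in $M$, so $|P \cap M| \ge d+1$. Substituting this into the displayed inequality yields $d_P \le \Vol(P) + d + 1 - (d+1) = \Vol(P)$, and the corollary follows.

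There is essentially no hard step here; the argument is a one-line combination of the two cited results. The only points requiring any care are that we rely on the spanning/very-ample case (part (2)) rather than Hering's bound (part (1)), so the standard-simplex exception built into part (1) never intervenes, and that the dimensional estimate $|P\cap M|\ge d+1$ is what makes the two $d+1$ terms cancel. Finally, note that $\Vol(P) \ge d_P$ (rather than a strict inequality) is exactly what is needed, since Proposition \ref{d_PP is normal} permits $m = d_P$ itself.
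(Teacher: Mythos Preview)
Your argument is correct and follows exactly the route the paper intends: combine the Hofscheier bound $d_P \le \Vol(P)+d+1-|P\cap M|$ from Proposition~\ref{d_P <=deg P}(2) with the trivial estimate $|P\cap M|\ge d+1$ to obtain $d_P\le \Vol(P)$, and then apply Proposition~\ref{d_PP is normal}. The paper presents the corollary without writing out a proof, citing precisely these two propositions, so your fill-in is the expected one.
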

%This is an interesting result since we get the impression that the smaller the volume of $P$, the closer $P$ gets to be normal. We also know that ``big enough" polytopes are normal (\cite{Gubeladze2012, Haase2017}), so it seems like there is a very small percentage of very ample but not normal polytopes.
%\begin{proof}
%We have for any $k\ge 1$,
%\[\underbrace{P\cap M+\cdots+P\cap M}_{(k-1)m}+mP\cap M\subseteq \underbrace{mP\cap M+\cdots+mP\cap M}_{k}\subseteq kmP\cap M.\]
%Now by the definition of $d_P$, we have a surjection
%\[\underbrace{P\cap M+\cdots+P\cap M}_{(k-1)m}+mP\cap M\twoheadrightarrow kmP\cap M.\]
%Thus for every $k\ge 1$,
%\[\underbrace{mP\cap M+\cdots+mP\cap M}_{k}\subseteq kmP\cap M,\]
%which implies that $mP$ is normal.
%\end{proof}

%The lemma follows directly from the definition of $d_P$. In term of toric varieties, it is equivalent to the following.
%\begin{corollary}
% 
%\end{corollary}
%Now let $C(P)\subseteq N_{\RR}\times \RR$ be the cone over $P$ and $k_0$ the maximal height of the elements in the Hilbert basis of $C(P)$, where the height is defined as in \cite[Section 2.2]{Cox2011}. We have $d_P\le k_0$ by \cite[Exercise 2.2.9]{Cox2011}, thus Lemma \ref{d_PP is normal} is also an improved result of \cite[Lemma 2.2.16.(b)]{Cox2011}. Combine with \cite[Lemma 2.2.16.(a)]{Cox2011}, we have
%
%\begin{lemma}
%Let $P$ be a lattice polytope of dimension $d$ and let $k_0$ be the maximum height of the elements of the Hilbert basis of $C(P)$. Then
%\[d_P\le k_0\le n-1.\]
%\end{lemma}

Furthermore, $d_P$ is a possible candidate for the minimum number $n_P$ with the property that $kP$ is normal for every $k\ge n_P$.
\begin{question}\label{minimality of d_P?}
Let $P$ be a lattice polytope. Then is it always the case that
\[d_P=\min\{n\in\ZZ_{\ge 1}\mid kP\text{ is normal for all }k\ge n\}?\]
\end{question}
This is true in case $\dim P=2$ or $\dim P=3$.
\begin{example}
Let $P$ be a lattice polytope such that $\dim(P)\le 3$. Then 
\[d_P=\min\{n\in\ZZ_{\ge 1}\mid kP\text{ is normal for all }k\ge n\}\]
\end{example}
\begin{proof}
The case $\dim P=2$ is trivial because any polygon is normal by \cite[Theorem 1.3.3 (a)]{Bruns1997}. If $\dim P=3$, either $d_P=1$ or $d_P=2$. For the case $d_P=1$, $P$ is then normal; hence, the statement is true in this case. If $d_P=2$, we have  $P$ is not normal but $kP$ is normal for all $k\ge 2$ by Proposition \ref{d_PP is normal}. The conclusion follows.
\end{proof}

However, it is possible that $kP$ is normal for some $k< d_P$; i.e., $nP$ is normal does not imply $(n+1)P$ is also normal. Laso\'{n} and Micha\l{}ek (\cite{Lason2017}) found an example of a polytope $P$ such that $2P$ and $3P$ are normal but $5P$ is not. For such a $P$ we must have $5<d_P$; otherwise, $5P$ must be normal by Proposition \ref{d_PP is normal}. This example also implies that the inequalities $k_{P+Q}\le \max\{k_P,k_Q\}$ and $d_{P+Q}\le \max\{d_P,d_Q\}$ do not hold in general. On the other hand, it is easy to see that for any lattice polytopes $P$ and $Q$, we have 
\[ d_{P\cup Q}\le d_{P\times Q}=d_{P\odot Q}= \max\{d_P,d_Q\},\]
and if $P$ and $Q$ are very ample,
\[k_{P\cup Q}\le k_{P\times Q}= k_{P\odot Q}=\max\{k_P,k_Q\},\]
where $P\times Q$ and $P\odot Q$ are the Cartesian product and the join of $P$ and $Q$, respectively. %These again confirm that the unions, Cartesian products, and joins of normal polytopes are also normal.

%\begin{proof}
%	 Let $d=\dim P$. We claim that for any $m\ge \deg P$, the map
%	\[mP\cap M+P\cap M\rightarrow (m+1)P\cap M\]
%	is surjective. Let $v\in (m+1)P\cap M$. Then we can write $v$ as a positive combination of $d+1$ vertices $v_1,\cdots, v_{d+1}$ of $P$:
%	\[v=\sum_{i=1}^{d+1}\lambda_iv_i,\hspace{5mm}\text{with }\sum_{i=1}^{d+1}\lambda_i=m+1.\]
%	If $\lambda_i<1$ for all $i$ then $\sum_{i=1}^{d+1}(1-\lambda_i)v_i$ is in the interior of $(d-m)P$, a contradiction to the definition of $\deg P$. Thus, there must be an $i$ so that $\lambda_i\ge 1$, without loss of generality, say $\lambda_1$. Then
%	\[v=v_1+\left((\lambda_1-1)v_1+\sum_{i=2}^{d+1}\lambda_iv_i\right)\]
%	with $v_1\in P\cap  M$ and $(\lambda_1-1)v_1+\sum_{i=2}^{d+1}\lambda_iv_i\in mP\cap M$. The conclusion follows.
%\end{proof}
\bigskip
The following corollary gives  criterion for normality of polytopes with small degree.
 
\begin{corollary}
	Let $P$ be a lattice polytope. Suppose that $P$ is $k$-normal for all $k\le \dim P-\deg P$. If $2\deg P\le \dim P$ then $P$ is normal.
\end{corollary}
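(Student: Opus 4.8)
The plan is to prove that the invariant $d_P$ equals $1$, since $d_P=1$ is equivalent to $P$ being normal. Write $d=\dim P$ and $e=\deg P$. The argument hinges on two facts already available in the excerpt: the inequality $d_P\le\deg P$ for polytopes other than standard simplices (Proposition \ref{d_P <=deg P}(1)), and the ``upward'' propagation of $k$-normality above $d_P$ recorded in Lemma \ref{d_P <= m_P <= k_P}(\ref{2.4.c}).

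First I would separate off the trivial case: a standard simplex is normal, so assume $P$ is not a standard simplex. Then Proposition \ref{d_P <=deg P}(1) yields $d_P\le e$, and since $d_P\ge 1$ this already forces $e\ge 1$. The hypothesis $2e\le d$ rewrites as $e\le d-e$, so chaining the inequalities gives $1\le d_P\le e\le d-e$. Consequently $d_P$ lies in the range $\{1,\dots,d-e\}$ on which $P$ is assumed to be $k$-normal, so in particular $P$ is $d_P$-normal.

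With $d_P$-normality in hand I would conclude quickly. Applying Lemma \ref{d_P <= m_P <= k_P}(\ref{2.4.c}) repeatedly, starting from $k=d_P$, shows $P$ is $k$-normal for every $k\ge d_P$, whence $k_P\le d_P$; together with $d_P\le k_P$ from Lemma \ref{d_P <= m_P <= k_P}(\ref{2.4.d}) this forces $k_P=d_P$. Finally the equivalence in Lemma \ref{d_P <= m_P <= k_P}(\ref{2.4.e}) turns $k_P=d_P$ into normality of $P$, completing the argument.

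The proof is short once the lemmas are aligned, so the only real point to watch — and the step I would check most carefully — is that $d_P$ genuinely lands inside the interval $\{1,\dots,d-e\}$ guaranteed by the hypothesis. This is exactly where both assumptions earn their keep: $2\deg P\le\dim P$ supplies $e\le d-e$, while excluding the standard simplex is what licenses the bound $d_P\le\deg P$ (for the standard simplex one has $d_P=1>0=\deg P$, so that bound fails and the case must be peeled off at the start).
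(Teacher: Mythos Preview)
Your proof is correct and follows essentially the same route as the paper: both use Proposition~\ref{d_P <=deg P}(1) together with $2\deg P\le\dim P$ to place $d_P$ inside the interval $\{1,\dots,\dim P-\deg P\}$ where $k$-normality is assumed, and then propagate $k$-normality upward from $d_P$ via the definition of $d_P$ (equivalently Lemma~\ref{d_P <= m_P <= k_P}(\ref{2.4.c})). The paper's version is terser---it simply says ``$P$ is normal because of the definition of $d_P$'' and does not separate out the standard simplex case---while you spell out the upward induction and finish through the equivalence $k_P=d_P\Leftrightarrow P$ normal in Lemma~\ref{d_P <= m_P <= k_P}(\ref{2.4.e}); this detour is sound but unnecessary, since once $P$ is $k$-normal for every $k\ge 1$ you already have normality directly.
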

\begin{proof}
	Since $d_P\le \deg(P)$ and $2\deg P\le \dim P$, we have $\dim P-\deg P \ge d_P$. Thus, $P$ is normal because of the definition of $d_P$.
\end{proof}

Since $P$ is $k$-normal does not imply $P$ is also $(k+1)$-normal in general (\cite[Theorem 9]{Handelman1990}), it makes sense to verify $k$-normality for $k\le \dim P-\deg P$ in the above corollary. 

\bibliographystyle{amsalpha}
\bibliography{Bibiliography}{}
\bibliographystyle{plain}
%\printindex

\end{document}